\numberwithin{equation}{section}
\numberwithin{table}{section}
\numberwithin{figure}{section}
\newtheorem{lemma}{Lemma}[section]
\newtheorem{theorem}{Theorem}[section]
\theoremstyle{definition}
\newcommand{\gv}[1]{\ensuremath{\boldsymbol{#1}}}
\newcommand{\grad}[1]{\gv{\nabla} #1} 
\renewcommand{\div}[1]{\nabla \cdot #1} 
\newcommand{\wt}[1]{\widetilde{#1}}
\newcommand{\wh}[1]{\widehat{#1}}
\newcommand{\MTh}{\mathcal{T}_h}
\newcommand{\supp}{\mathop{\mathrm{supp}}}
\newcommand{\lj}{[ \hspace{-2pt} [}
\newcommand{\rj}{] \hspace{-2pt} ]}
\def\na{\nabla}
\def\pa{\partial}
\def\Lam{\Lambda}
\def\Om{\Omega}
\newcommand{\mb}[1]{\mathbb{#1}}
\newcommand{\mc}[1]{\mathcal{#1}}
\newcommand{\abs}[1]{\left\lvert#1\right\rvert}
\newcommand{\nm}[2]{\|\,#1\,\|_{#2}}
\newcommand{\snm}[2]{\abs{\,#1\,}_{#2}}
\newcommand{\Lr}[1]{\left(#1\right)}
\newcommand{\set}[2]{\{\,#1\,\mid\,#2\}}
\DeclareMathOperator{\argmin}{arg min}
\definecolor{lightgray}{gray}{0.9}
\begin{document}
\title[Stokes Equation]{A Discontinuous Galerkin Method for the Stokes
 Equation by Divergence-free Patch Reconstruction}

\author[R. Li]{Ruo Li}
\address{CAPT, LMAM and School of Mathematical Sciences, Peking
University, Beijing 100871, P.R. China} \email{rli@math.pku.edu.cn}

\author[Z.-Y. Sun]{Zhiyuan Sun}
\address{School of Mathematical Sciences, Peking University, Beijing
  100871, P.R. China} \email{zysun@math.pku.edu.cn}

\author[Z.-J. Yang]{Zhijian Yang}
\address{School of Mathematics and Statistics, Wuhan University, Wuhan
  430072, P.R. China} \email{zjyang.math@whu.edu.cn}

\maketitle
\begin{abstract}
  A discontinuous Galerkin method by patch reconstruction is proposed
  for Stokes flows. A locally divergence-free reconstruction space is
  employed as the approximation space, and the interior penalty method
  is adopted which imposes the normal component penalty terms to
  cancel out the pressure term. Consequently, the Stokes equation can
  be solved as an elliptic system instead of a saddle-point problem
  due to such weak form. The number of degree of freedoms of our
  method is the same as the number of elements in the mesh for
  different order of accuracy. The error estimations of the proposed
  method are given in a classical style, which are then verified by
  some numerical examples.

\noindent\textbf{Keywords:} discontinued Galerkin; patch
reconstruction; Stokes equation; divergence-free; interior penalty

\noindent\textbf{MSC2010:} 65N30;76D07

\end{abstract}
\section{Introduction}
The incompressible Stokes equations describe the low Reynolds number
flows which are the linearization of the Navier-Stokes equations
\cite{temam2001navier}. In the finite element method for
incompressible Stokes problem see, e.g. \cite{crouzeix1973conforming},
the major concern is how to impose the incompressibility
condition. The conforming mixed finite element methods are usually
introduced weakly by the Lagrange multiplier, and solve a saddle
points problem. The methods are required to setup the approximation
spaces for velocity and pressure to satisfy the in-sup conditions,
which is necessary to guarantee the numerical stability. Consequently,
the numerical solution obtained is weakly divergence-free, we refer
readers to \cite{boffi2013mixed} for more details.

The other approach to solve the incompressible Stokes equation is
constructing a divergence-free space to automatically fulfill the
incompressible condition and eliminate the pressure. There are many
efforts on constructing global divergence-free basis function. For
example, the Crouzeix-Raviart elements was proposed in
\cite{crouzeix1973conforming}, which are divergence-free in each
element and continuous at the midpoint of element edges. The $H(div)$
finite elements was proposed in \cite{wang2009robust}. The methods to
construct these divergence-free spaces are quite subtle that it is a
nontrivial job to extend these methods to a grid with its elements in
unusual geometry. An alternative way is abandoning the normal
continuity of basis function, and using the locally divergence-free
elements which are proposed in \cite{baker1990piecewise}. See the
recent developments in this fold in
\cite{karakashian1998nonconforming, cockburn2007note, liu2011penalty}.

The discontinuous Galerkin (DG) method by patch reconstruction was
recently introduced in \cite{li2016discontinuous} to solving the
elliptic equation, then was developed in \cite{li2017discontinuous}
and \cite{li2018finite} to various model problems. We are motivated to
approximate the divergence-free velocity in Stokes problem using the
discontinuous space by patch reconstruction. Actually, we propose a
locally divergence-free space by imposing the constrain locally in
patch reconstruction. With the new space, we adopt the interior
penalty method to Stokes problems \cite{hansbo2008piecewise}. The
penalty term is introduced on the normal component to control the
inconsistent error. Consequently, an elliptic problem is attained to
be solved instead of a saddle-point problem. To construct the locally
divergence-free space, we use the piecewisely solenoidal polynomial
functions only. The space has one degree of freedom in each
element. Therefore, the dimension of approximation space does not
depend on the polynomial orders. The interpolation error estimate of
this space can be obtained naturally, which leads to the approximate
error estimate of the Stokes problem following the standard
techniques. We note that the new space is a subspace of canonical
locally divergence-free DG space. Our method enjoys the advantages of
DG method, for example that it works on meshes with polygonal
elements. We demonstrate by numerical examples on different meshes.

The rest of this paper is organized as follows. In Section
\ref{sec:basis}, we describe the method to construct the locally
divergence-free approximation space and and give the approximation
estimate of this space. Then we present the interior penalty method of
discontinued Galerkin method for Stokes problems in Section
\ref{sec:weakform}. The error estimate of the proposed method is
obtained under the DG energy norm. Finally, numerical results for
two-dimensional examples are presented in Section \ref{sec:examples}
to validate our estimates and demonstrate the capacity of our method.


\section{Construction of Approximation Space}\label{sec:basis}
Let $\Om$ be a polygonal domain in $\mb{R}^2$. The mesh $\MTh$ is a
partition of $\Om$ with polygons denoted by $K$ that $\bigcup_{K \in
  \mathcal{T}_h} \bar{K} = \Omega$. For any two elements $K_0$, $K_1
\in \mathcal{T}_h$, $K_0 \bigcap K_1 = \emptyset$ if $K_0 \neq
K_1$. Here $h{:}=\max_{K\in\MTh}h_K$ with $h_K$ the diameter of
$K$. We denote by $\abs{K}$ the area of $K$. $\Gamma_h$ denotes the union
of boundaries of element $K\in \MTh$, where $\Gamma_h^0$ is the union
of interior boundaries $\Gamma_h^0 \triangleq \Gamma_h \setminus
\partial \Omega$. We assume that the partition $\MTh$ possesses the
following shape regularity conditions ~\cite{Brezzi:2009,DaVeiga2014}:

\begin{enumerate}
\item[{\bf A1}\;]There exists an integer $N$ independent of $h$, that
  any element $K$ admits a sub-decomposition $\wt{\MTh}|_K$ which
  consists of at most $N$ triangles $T$.
\item[{\bf A2}\;] $\wt{\MTh}$ is a compatible sub-decomposition, that
  any $T\in\wt{\MTh}$ is shape-regular in the sense of
  Ciarlet-Raviart~\cite{ciarlet:1978}: there exists a real positive
  number $\sigma$ independent of $h$ such that $h_T/\rho_T\le\sigma$,
  where $\rho_T$ is the radius of the greatest ball inscribed in $T$.
\end{enumerate}

Let $D$ to be a subdomain of $\Om$, which may be an element or an
aggregation of the elements belong to $\MTh$. Let $H^{m}(D)$ demote
the Sobolev space of real valued functions on $D$ with a positive
integer $m$. For vector valued functions, we introduce the spaces
$\gv{H}^{m}(D)=[H^{m}(D)]^2$. Let $\mb{P}_m(D)$ be a set of polynomial
with total degree not greater than $m$ on domain $D$, and the
corresponding vector valued polynomial spaces $[\mb{P}_m(D)]^2$ is
denoted by $\pmb{\mb{P}}_m(D)$.

Assumptions {\bf A1} and {\bf A2} allow quite general shapes in the
partition, such as non-convex or degenerate elements. They also lead
to some common used properties and inequalities:

\begin{enumerate}
\item[{\bf M1}]$\forall T\in\wt{\MTh}$, there exists $\rho_1\ge 1$
  that depends on $N$ and $\sigma$ such that $h_K/h_T\le\rho_1$.

\item[{\bf M2}][{\it Agmon inequality}]\;There exists a constant $C$
  that depends on $N$ and $\sigma$, but independent of $h_K$ such that
\begin{equation}\label{eq:agmon}
\nm{v}{L^2(\pa K)}^2\le C\Lr{h_K^{-1}\nm{v}{L^2(K)}^2+h_K\nm{\na
    v}{L^2(K)}^2},\qquad\text{for all\quad}v\in H^1(K).
\end{equation}

\item[{\bf M3}][{\it Approximation property}]\;There exists a constant
  $C$ that depends on $N,r$ and $\sigma$, but independent of $h_K$
  such that for any $v\in H^{r+1}(K)$, there exists an approximation
  polynomial $\wt{v}\in\mb{P}_r(K)$ such that
\begin{equation}\label{eq:app}
\nm{v-\wt{v}}{L^2(K)}+h_K\nm{\na(v-\wt{v})}{L^2(K)}\le
Ch_K^{r+1}\snm{v}{H^{r+1}(K)}.
\end{equation}

\item[{\bf M4}][{\it Inverse inequality}]\;For any $v\in\mb{P}_r(K)$,
  there exists a constant $C$ that depends only on $N,r,\sigma$ and
  $\rho_1$ such that
\begin{equation}\label{eq:inverse}
\nm{\na v}{L^2(K)}\le Ch_K^{-1}\nm{v}{L^2(K)}.
\end{equation}

\item[{\bf M5}][{\it Discrete trace inequality}]\;For any
  $v\in\mb{P}_r(K)$, there exists a constant $C$ that depends only on
  $N,r,\sigma$ and $\rho_1$ such that
\begin{equation}\label{eq:traceinv}
\nm{v}{L^2(\pa K)}\le C h_K^{-1/2}\nm{v}{L^2(K)}.
\end{equation}

\end{enumerate}
The above four
inequalities~\eqref{eq:agmon},~\eqref{eq:app},~\eqref{eq:inverse}
and~\eqref{eq:traceinv} are hold for vector valued space
$\gv{H}^{m}(D)$ and $\pmb{\mb{P}}_m(D)$ with corresponding norms,
\begin{equation*}
 \|\gv v\|_{H^{m}(D)}^2\triangleq\sum_{i=1}^2\|\gv v_i\|_{
   H^{m}(D)}^2,\quad \gv v\in \gv{H}^m(D).
\end{equation*}
We are interested in the spaces of solenoidal vector
fields
\[
\gv{S}^m(D)=\{\gv{v}\in \gv{H}^m(D):\div \gv{v}=0 \quad \text{in} \ D
\},
\]
and the polynomial solenoidal vector field
\[
\mb{S}_m(D)=\{\gv{v}\in \pmb{\mb{P}}_m(D):\div \gv{v}=0 \quad
\text{in} \ D \}.
\]
The above inequalities are also hold for the $\gv{S}^m(D)$ and
$\mb{S}_m(D)$, see in~\cite{baker1990piecewise}. Here we restate the
approximation property:

{\bf M3} [{\it Approximation property}]\;There exists a constant $C$
that depends on $N,r$ and $\sigma$, but independent of $h_K$ such that
for any $\gv{v}\in \gv{S}^{r+1}(K)$, there exists an approximation
polynomial $\wt{\gv v}\in\mb{S}_r(K)$ such that
\begin{equation}\label{eq:app-restate}
\nm{\gv v-\wt{\gv v}}{L^2(K)}+h_K\nm{\na(\gv v-\wt{\gv v})}{L^2(K)}\le
Ch_K^{r+1}\snm{\gv v}{H^{r+1}(K)}.
\end{equation}

In~\cite{li2016discontinuous}, we introduced the reconstruction
operator which mapping the piecewise constant space to discontinuous
piecewise polynomial space. In this paper, we intend to construct a
reconstruction operator $\mc{S}$ that embeds the piecewise constant
vector valued space to discontinuous piecewise solenoidal vector
fields.  For each element $K\in\MTh$, we assign a sampling node or
collocation point $\gv{x}_K \in K$ and element patch $S(K)$. $S(K)$
usually contains $K$ and some elements around $K$. It is quite
flexible to assign the sampling nodes and construct the element
patch. Usually, we let the barycenter of the element $K$ to be the
sampling node, while a perturbation is allowed,
cf. ~\cite{li2016discontinuous}. The element patch is built up by
adding Von Neumann neighbors (adjacent edge-neighboring elements)
recursively until the size of the element patch reaches a fixed
number, for the details we refer to~\cite{li2016discontinuous,
  li2012efficient}.

Let $\mc{I}(K)$ denote the set of the sampling nodes belonging to
$S(K)$ with $\#\mc{I}(K)$ denote its cardinality
\[
\mc{I}(K) = \left\{ \gv{x}_{K'} | K' \in S(K) \right\},
\]
and let $\#S(K)$ be the number of elements belonging to $S(K)$. These
two numbers $\#\mc{I}(K)$ and $\#S(K)$ are equal to each other. We
define $d_K{:}=\text{diam}\;S(K)$ and $d=\max_{K\in\MTh}d_K$.

Denote the piecewise constant space associated with $\MTh$ by $U_h$,
i.e.,
\[
U_h{:}=\set{v\in L^2(\Om)}{v|_K\in\mb{P}_0(K)}.
\]
and let $\gv{U}_h=[U_h]^2$ to be the piecewise constant vector valued
space.

For any $\gv{v}\in \gv{U}_h$ and for any $K\in\MTh$, we reconstruct a
high order solenoidal polynomials $\mc{S}_K \gv{v}$ of degree $m$ by
solving the following discrete least-square problem.
\begin{equation}\label{eq:leastsquares}
\mc{S}_K
\gv{v}=\arg \min_{\gv{p}\in\mb{S}_m(S(K))}\sum_{\gv{x}\in\mc{I}(K)}\abs{\gv{v}(\gv{x})-\gv{p}(\gv{x})}^2.
\end{equation}

Though $\mc{S}_K \gv{v}$ gives a solenoidal approximation on the
entire element patch $S(K)$, but we only use the approximation on the
element $K$. Then the reconstruction operator $\mc{S}_K$ can extended
to the function space $[C^0(S(K))]^2\cap \gv{S}^m(S(K))$, still denote
by $\mc{S}_K$ without ambiguity,
\[
 \mc{S}_K: ~\gv{v} \mapsto \mc{S}_K \gv{v} = \mc S_K\tilde{\gv{v}},
 \quad \forall \gv{v}\in [C^0(S(K))]^2\cap \gv{S}^m(S(K)),
\]
where $\tilde{\gv{v}}( \gv{x}_K')=\gv{v}( \gv{x}_K'), \forall
\gv{x}_K' \in \mc{I}(K)$.

We make the following assumption on the sampling node
set $\mc{I}(K)$.
\noindent\vskip .5cm {\bf Assumption A}\; For any $K\in\MTh$ and
$\gv{p}\in\mb{S}_m(S(K))$,
\begin{equation}\label{assumption:uniqueness}
\gv{p}|_{\mc{I}(K)}=0\quad\text{implies\quad}\gv{p}|_{S(K)}\equiv 0.
\end{equation}

This assumption can guarantee the uniqueness of the solution of discrete
least-square problem~\eqref{eq:leastsquares}. Obvious, a necessary
condition for the solvability of ~\eqref{eq:leastsquares} is that
$\#\mc{I}(K)$ is greater than $(m+1)(m+4)/4$. {\bf Assumption A} is
equal to the following quantitative estimate
\[
\Lambda(m, \mc{I}(K))<\infty
\]
with
\begin{equation}\label{eq:cons}
\Lambda(m, \mc{I}(K)){:}=\max_{\gv{p}\in\mb{S}_m(S(K))}
\dfrac{\nm{\gv{p}}{L^\infty(S(K))}}{\nm{\gv{p}|_{\mc{I}(K)}}{\ell_\infty}}.
\end{equation}

If the mesh is quasi-uniform triangulation and each element patch is
convex, the quantitative estimate of the uniform upper bound of
$\Lambda(m, \mathcal I_K)$ for the general polynomial is obtained in
\cite{li2012efficient}. The requirements of the uniform upper bound
are hard to be satisfied while the polygonal meshes are applied. We
refer the readers to \cite{li2016discontinuous} which gives the
uniform upper bound under milder assumptions, such as polygonal
partition and non-convex element patch. Due to the solenoidal
polynomial space is the subspace of the general polynomial vector
valued space, $\Lambda(m, \mathcal I(K))$ in \eqref{eq:cons} is
uniformly bounded under the same assumption.

\begin{lemma}\label{theorem:localapp}
  If {\em Assumption A} holds, then there exists a unique solution of
  \eqref{eq:leastsquares}, denoted by $\mc{S}_K \gv{v}$, for any
  $K\in\MTh$. Moreover $\mc{S}_K$ satisfies
\begin{equation}\label{eq:invariance}
\mc{S}_K \gv{g}=\gv{g} \quad\text{for
  all\quad}\gv{g}\in\mb{S}_m(S(K)).
\end{equation}
The stability property holds true for any $K\in\MTh$ and $\gv{g}\in
[C^0(S(K))]^2$ and $\div \gv{g}=0$ as
\begin{equation}\label{eq:continuous}
\nm{\mc{S}_K \gv{g}}{L^{\infty}(K)}\le\Lambda(m , \mc{I}(K)) \sqrt{\#
  \mc{I}(K)}\nm{\gv{g}|_{\mc{I}(K)}}{\ell_\infty},
\end{equation}
and the quasi-optimal approximation property is valid in the sense
that
\begin{equation}\label{eq:approximation}
\nm{\gv{g} -\mc{S}_K \gv{g}}{L^{\infty}(K)}\le\Lambda_m
\inf_{\gv{p}\in\mb{S}_m(S(K))} \nm{\gv{g} - \gv{p}}{L^{\infty}(S(K))},
\end{equation}
where $\Lambda_m{:}=\max_{K\in \MTh} \{1+\Lambda(m,\mc{I}(K))\sqrt{\#
  \mc{I}(K)}\}$.
\end{lemma}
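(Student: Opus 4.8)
The plan is to treat \eqref{eq:leastsquares} as a finite-dimensional linear least-squares problem and to read off all four claims from the algebra of its normal equations. First I would fix a basis $\{\gv{\phi}_1,\dots,\gv{\phi}_L\}$ of $\mb{S}_m(S(K))$ and write $\gv{p}=\sum_j c_j\gv{\phi}_j$, so that the functional in \eqref{eq:leastsquares} becomes $\nm{A\gv{c}-\gv{b}}{\ell_2}^2$, where $A$ is the evaluation matrix whose rows are indexed by $\mc{I}(K)$ (each sampling node contributing its two Cartesian components) and $\gv{b}=\gv{v}|_{\mc{I}(K)}$. Assumption A says exactly that $\gv{p}|_{\mc{I}(K)}=0$ forces $\gv{p}\equiv0$, i.e. $A$ has trivial kernel and hence full column rank; consequently $A^{\top}A$ is symmetric positive definite and the normal equations $A^{\top}A\gv{c}=A^{\top}\gv{b}$ possess a unique solution. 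This gives existence and uniqueness of $\mc{S}_K\gv{v}$, and, since $\gv{c}=(A^{\top}A)^{-1}A^{\top}\gv{b}$ depends linearly on $\gv{b}$, it also records that $\mc{S}_K$ acts linearly on the nodal data, a fact I use twice below.

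The invariance \eqref{eq:invariance} is then immediate: for $\gv{g}\in\mb{S}_m(S(K))$ the choice $\gv{p}=\gv{g}$ drives the objective to zero, which is the global minimum, so uniqueness forces $\mc{S}_K\gv{g}=\gv{g}$. For the stability bound \eqref{eq:continuous} the key observation is that the sampled minimizer $(\mc{S}_K\gv{g})|_{\mc{I}(K)}$ is precisely the $\ell_2$-orthogonal projection of $\gv{g}|_{\mc{I}(K)}$ onto the range of $A$; an orthogonal projection is nonexpansive, so $\nm{(\mc{S}_K\gv{g})|_{\mc{I}(K)}}{\ell_2}\le\nm{\gv{g}|_{\mc{I}(K)}}{\ell_2}$. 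I would then pass from the sampled values back to $L^\infty(K)$ by the definition \eqref{eq:cons} of $\Lambda(m,\mc{I}(K))$, which controls $\nm{\mc{S}_K\gv{g}}{L^\infty(K)}\le\nm{\mc{S}_K\gv{g}}{L^\infty(S(K))}\le\Lambda(m,\mc{I}(K))\,\nm{(\mc{S}_K\gv{g})|_{\mc{I}(K)}}{\ell_\infty}$, and finally absorb the norm change through $\nm{\cdot}{\ell_\infty}\le\nm{\cdot}{\ell_2}\le\sqrt{\#\mc{I}(K)}\,\nm{\cdot}{\ell_\infty}$. Chaining these produces exactly the factor $\Lambda(m,\mc{I}(K))\sqrt{\#\mc{I}(K)}$.

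For the quasi-optimal estimate \eqref{eq:approximation} I would run the standard ``insert the best polynomial'' argument. Pick any $\gv{p}\in\mb{S}_m(S(K))$; by invariance $\mc{S}_K\gv{p}=\gv{p}$, and by linearity $\gv{g}-\mc{S}_K\gv{g}=(\gv{g}-\gv{p})-\mc{S}_K(\gv{g}-\gv{p})$. Taking $L^\infty(K)$ norms, the triangle inequality together with the stability bound \eqref{eq:continuous} applied to $\gv{g}-\gv{p}$ yields $\nm{\gv{g}-\mc{S}_K\gv{g}}{L^\infty(K)}\le\bigl(1+\Lambda(m,\mc{I}(K))\sqrt{\#\mc{I}(K)}\bigr)\nm{\gv{g}-\gv{p}}{L^\infty(S(K))}$, where I have bounded the nodal values of $\gv{g}-\gv{p}$ by its $L^\infty(S(K))$ norm. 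Taking the infimum over $\gv{p}$ and the maximum over $K$ in the constant produces $\Lambda_m$ and closes \eqref{eq:approximation}.

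I expect the only genuinely substantive step to be the passage from Assumption A to the full column rank of $A$, and hence to the invertibility of $A^{\top}A$; everything downstream is linear algebra and the norm-equivalence bookkeeping that manufactures the $\sqrt{\#\mc{I}(K)}$ factor. One point I would verify carefully is that the solenoidal constraint plays no special role beyond supplying the finite-dimensional space $\mb{S}_m(S(K))$ on which $\Lambda$ is finite: the argument is structurally identical to the scalar reconstruction lemma of \cite{li2016discontinuous}, with $\mb{S}_m$ replacing the full polynomial space, and the continuity hypothesis $\gv{g}\in[C^0(S(K))]^2$ is used only to make the nodal evaluations $\gv{g}|_{\mc{I}(K)}$ well defined.
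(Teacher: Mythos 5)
Your proposal is correct and follows essentially the same route as the paper: invariance from uniqueness of the least-squares minimizer, stability by chaining the definition of $\Lambda(m,\mc{I}(K))$ with the $\ell_2$-nonexpansiveness of the discrete projection and the equivalence $\nm{\cdot}{\ell_\infty}\le\nm{\cdot}{\ell_2}\le\sqrt{\#\mc{I}(K)}\,\nm{\cdot}{\ell_\infty}$, and quasi-optimality by inserting a competitor polynomial and using linearity, invariance, and the triangle inequality. The only difference is that you spell out the normal-equations/full-column-rank argument for existence and uniqueness, which the paper compresses into the remark that $\mc{S}_K$ is a projection with respect to the discrete $\ell_2$ norm.
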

\begin{proof}
  The reconstruction operator $\mc{S}_K$ is a projection operator from
  $[C^0(\Omega)]^2$ to $\mb{S}_m(S(K))$ with the discrete $l_2$ norm,
  the identity~\eqref{eq:invariance} is obvious.

By the {\em Assumption A} and definition of $\Lambda(m,\mc{I}(k))$ in
equation~\eqref{eq:cons},
  \begin{equation}\label{continuous_1}
    \|\mc{S}_K\gv{g}\|_{L^{\infty}(K)} \leq
    \|\mc{S}_K\gv{g}\|_{L^{\infty}(S(K))} \leq
    \Lambda(m,\mc{I}(K))\max_{x\in \mc{I}_k}|\mc{S}_K\gv{g}(x)|.
  \end{equation}
From the projection property of operator $\mc{S}_K$, we have

\begin{equation}\label{continuous_2}
\sum_{x\in \mc{I}(K)} \Lr{\mc{S}_K\gv{g}(x)}^2 \leq \sum_{x\in
  \mc{I}}\Lr{\gv{g}(x)}^2 \leq \#\mc{I}(K) \max_{x\in
  \mc{I}_k}|\gv{g}(x)|^2 .
\end{equation}
Combining ~\eqref{continuous_1} and~\eqref{continuous_2}, we get

\[
\|\mc{S}_K\gv{g}\|_{L^{\infty}(K)} \leq \Lambda(m,\mc{I}(K))
\sqrt{\#\mc{I}(K)} \nm{\gv{g}|_{\mc{I}(K)}}{\ell_\infty}.
\]

Assume that $\gv{p}_0$ is the best approximation of $\gv{g}$ under
$L^{\infty}$ norm, $\gv{p}_0 \in \mb{S}_m(S(K))$, and
\[
\|\gv{g}-\gv{p}_0\|_{L^{\infty}(S(K))} = \inf_{\gv{p}\in
  \mb{S}_m(S(K))} \|\gv{g}-\gv{p}\|_{L^{\infty}(S(K))}.
\]
Then, we have following estimate:
\begin{eqnarray*}
  \|\mc{S}_K\gv{g} -\gv{p}_0\|_{L^{\infty}(K)} &=& \|\mc{S}_K(\gv{g}
  -\gv{p}_0)\|_{L^{\infty}(K)} \\ &\leq& \Lambda(m,\mc{I}(K))
  \sqrt{\#\mc{I}(K)} \max_{x \in \mathcal{I}(K)} |(\gv{g}
  -\gv{p}_0)(x)| \\ &\leq& \Lambda(m,\mc{I}(K)) \sqrt{\#\mc{I}(K)}
  \|\gv{g}-\gv{p}_0\|_{L^{\infty}(S(K))} \\ &=& \Lambda(m,\mc{I}(K))
  \sqrt{\#\mc{I}(K)} \inf_{\gv{p}\in \mb{S}_m(S(K))}
  \|\gv{g}-\gv{p}\|_{L^{\infty}(S(K))}.
\end{eqnarray*}

By triangle inequality, the left side of \eqref{eq:approximation} can
be written as
\begin{eqnarray*}
  \|\gv{g} - \mc{S}_K\gv{g}\|_{L^{\infty}(K)} &\leq& \|
  \gv{g}-\gv{p}_0\|_{L^{\infty}(K)} + \|\mc{S}_K\gv{g}
  -\gv{p}_0\|_{L^{\infty}(K)} \\ &\leq& (1+\Lambda(m,\mc{I}(K))
  \sqrt{\#\mc{I}(K)}) \inf_{\gv{p}\in
    \mb{S}_m(S(K))}\|\gv{g}-\gv{p}\|_{L^{\infty}(S(K))}.
\end{eqnarray*}
Together with the definition of $\Lambda_m$, it implies the
quasi-optimality \eqref{eq:approximation}.
\end{proof}

\begin{lemma}
If {\em Assumption A} holds and $\gv{g}\in [C^0(S(K))]^2\cap
\gv{S}^{m+1}(S(K))$, then there exists a constant $C$ that depends on
$N,\sigma$, $\gamma$ and $\rho_1$ such that
\begin{align}
\nm{\gv{g}-\mc{S}_{K} \gv{g}}{L^{2}(K)}&\le C\Lam_m
h_Kd_K^m\snm{\gv{g}}{H^{m+1}(S(K))}.\label{eq:l2app}\\ \nm{\na(\gv{g}-\mc{S}_K
  \gv{g})}{L^2(K)}&\le C
\Lr{h_K^m+\Lam_{m}d_K^m}\snm{\gv{g}}{H^{m+1}(S(K))}.\label{eq:h1app}\\ \nm{\gv{g}-\mc{S}_{K}
  \gv{g}}{L^{2}(\partial K)}&\le C\Lam_m
h_K^{1/2}d_K^m\snm{\gv{g}}{H^{m+1}(S(K))}.\label{eq:traceapp}
\end{align}
\end{lemma}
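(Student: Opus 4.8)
The plan is to deduce all three estimates from the $L^\infty$ quasi-optimality \eqref{eq:approximation} of Lemma \ref{theorem:localapp}, the local approximation property \eqref{eq:app-restate}, and the inverse inequality \eqref{eq:inverse}. The one genuinely new ingredient I would establish first is a solenoidal Bramble--Hilbert estimate in $L^\infty$ over the whole patch,
\begin{equation}\label{eq:Linfapp}
\inf_{\gv{p}\in\mb{S}_m(S(K))}\nm{\gv{g}-\gv{p}}{L^\infty(S(K))}\le C\,d_K^{m}\,\snm{\gv{g}}{H^{m+1}(S(K))}.
\end{equation}
This is where the patch geometry enters: I would take the Dupont--Scott polynomial approximant of $\gv{g}$ on $S(K)$ (a domain of diameter $d_K$ that is star-shaped with respect to a ball of comparable radius, by \textbf{A1}, \textbf{A2} and the neighbour-recursive construction of $S(K)$), control the error in $L^2$, $H^1$ and $H^2$ by $d_K^{m+1}$, $d_K^{m}$ and $d_K^{m-1}$ times $\snm{\gv{g}}{H^{m+1}(S(K))}$, and then pass to $L^\infty$ by the scaled two-dimensional Sobolev embedding $H^{2}\hookrightarrow C^{0}$, whose three scaling powers combine to the single factor $d_K^{m}$. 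That the approximant may be taken divergence free is exactly the content of the solenoidal approximation theory of \cite{baker1990piecewise}.

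Granting \eqref{eq:Linfapp}, the bound \eqref{eq:l2app} is immediate: passing from $L^2(K)$ to $L^\infty(K)$ costs a factor $\abs{K}^{1/2}\le Ch_K$, and \eqref{eq:approximation} followed by \eqref{eq:Linfapp} gives
\begin{equation*}
\nm{\gv{g}-\mc{S}_K\gv{g}}{L^2(K)}\le\abs{K}^{1/2}\nm{\gv{g}-\mc{S}_K\gv{g}}{L^\infty(K)}\le C\Lam_m h_K\,\inf_{\gv{p}\in\mb{S}_m(S(K))}\nm{\gv{g}-\gv{p}}{L^\infty(S(K))}.
\end{equation*}
The trace estimate \eqref{eq:traceapp} follows the same route with the surface measure in place of the area: since $\partial K\subset\ov{K}$ we have $\nm{\gv{g}-\mc{S}_K\gv{g}}{L^\infty(\partial K)}\le\nm{\gv{g}-\mc{S}_K\gv{g}}{L^\infty(K)}$, and the perimeter obeys $\abs{\partial K}^{1/2}\le Ch_K^{1/2}$ under the shape regularity, so the factor $h_K$ in \eqref{eq:l2app} is simply replaced by $h_K^{1/2}$.

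The gradient bound \eqref{eq:h1app} is the one step that cannot be read off from the $L^\infty$ theory directly, and I would handle it by inserting a purely local approximant. Let $\wh{\gv{g}}\in\mb{S}_m(K)$ be the degree-$m$ solenoidal approximation of $\gv{g}$ on $K$ supplied by \eqref{eq:app-restate}, and split
\begin{equation*}
\na(\gv{g}-\mc{S}_K\gv{g})=\na(\gv{g}-\wh{\gv{g}})+\na(\wh{\gv{g}}-\mc{S}_K\gv{g}).
\end{equation*}
The first term is bounded by \eqref{eq:app-restate} by $Ch_K^{m}\snm{\gv{g}}{H^{m+1}(K)}$. In the second term $\wh{\gv{g}}-\mc{S}_K\gv{g}$ is a polynomial on $K$, so the inverse inequality \eqref{eq:inverse} converts the gradient into $Ch_K^{-1}\nm{\wh{\gv{g}}-\mc{S}_K\gv{g}}{L^2(K)}$; inserting $\pm\gv{g}$ and using \eqref{eq:app-restate} on $\nm{\gv{g}-\wh{\gv{g}}}{L^2(K)}$ together with the already-proved \eqref{eq:l2app} on $\nm{\gv{g}-\mc{S}_K\gv{g}}{L^2(K)}$ yields $C(h_K^{m}\snm{\gv{g}}{H^{m+1}(K)}+\Lam_m d_K^{m}\snm{\gv{g}}{H^{m+1}(S(K))})$. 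Adding the two contributions and using $\snm{\gv{g}}{H^{m+1}(K)}\le\snm{\gv{g}}{H^{m+1}(S(K))}$ gives exactly \eqref{eq:h1app}. The main obstacle throughout is \eqref{eq:Linfapp}: one must verify that the patch is regular enough (the constant there is what forces the dependence on the extra geometric parameter $\gamma$) for a single Bramble--Hilbert estimate with the sharp power $d_K^m$ to hold in $L^\infty$ over the entire union $S(K)$ rather than element by element.
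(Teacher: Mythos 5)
Your proposal is correct, and for the key lemma and the first two estimates it is exactly the paper's argument: the paper also reduces everything to the patchwise solenoidal $L^\infty$ bound $\inf_{\gv p\in\mb{S}_m(S(K))}\nm{\gv g-\gv p}{L^\infty(S(K))}\le Cd_K^m\snm{\gv g}{H^{m+1}(S(K))}$ (its \eqref{eq:starapp}, with constant depending on $N,m,\sigma,\gamma$), except that it simply cites Theorem 4.3 of Baker--Jureidini--Karakashian for it rather than re-deriving it via Dupont--Scott plus scaled Sobolev embedding as you sketch; it then gets \eqref{eq:l2app} from $\abs{K}^{1/2}\le Ch_K$ together with the quasi-optimality \eqref{eq:approximation}, and proves \eqref{eq:h1app} by precisely your splitting: insert a local polynomial approximant from \eqref{eq:app}, apply the inverse inequality \eqref{eq:inverse}, add and subtract $\gv g$, and invoke the already-proved \eqref{eq:l2app}. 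The one place you genuinely deviate is the trace bound \eqref{eq:traceapp}: you bound $\nm{\gv g-\mc{S}_K\gv g}{L^2(\partial K)}$ by $\abs{\partial K}^{1/2}\nm{\gv g-\mc{S}_K\gv g}{L^\infty(K)}$ and use the perimeter bound $\abs{\partial K}\le Ch_K$, whereas the paper applies the Agmon inequality \eqref{eq:agmon} to combine \eqref{eq:l2app} and \eqref{eq:h1app} (absorbing the resulting $h_K^{2m}$ term into $\Lam_m^2d_K^{2m}$ via $h_K\le d_K$ and $\Lam_m\ge 1$). Both routes give the same power $h_K^{1/2}d_K^m$; yours is slightly more economical, since it never needs the gradient estimate and only uses continuity of $\gv g-\mc{S}_K\gv g$ up to $\ov{K}$, which is available here because $\gv g\in[C^0(S(K))]^2$, while the paper's Agmon route is the standard trace mechanism and would apply even when only $H^1$ control of the error, rather than pointwise control, is at hand.
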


\begin{proof}
By~\cite[Theorem 4.3]{baker1990piecewise}, we take
$\gv{p}=\gv{\chi}\in\mb{S}_m$in the right-hand side
of~\eqref{eq:approximation}, where $\gv{\chi}$ is the approximation
solenoidal polynomial of order $m$.  Then
\begin{equation}\label{eq:starapp}
\inf_{p \in\mb{S}_m(S(K))}\nm{\gv g-\gv p}{L^{\infty}(S(K))}\le\nm{\gv
  g- \gv \chi}{L^{\infty}(S(K))}\le Cd_K^m\snm{\gv g}{H^{m+1}(S(K))},
\end{equation}
where $C$ depends on $N,m,\sigma$ and $\gamma$.

Substituting the above estimate~\eqref{eq:starapp}
into~\eqref{eq:approximation}, we obtain
\[
\nm{\gv g-\mc{S}_{K} \gv g}{L^{2}(K)}\le\abs{K}^{1/2}\|\gv
g-\mc{S}_{K} \gv g\|_{L^{\infty}(K)} \le C\Lam_m h_Kd_K^m\snm{\gv
  g}{H^{m+1}(S(K))}.
\]
which gives~\eqref{eq:l2app}.

Then, assume that $\wh{\gv g}_m$ be the approximation polynomial
in~\eqref{eq:app} for function $\gv g$, by the {\em inverse
  inequality}~\eqref{eq:inverse} and the approximation
estimate~\eqref{eq:l2app}, then we have
\begin{align*}
\nm{\na(\gv g-\mc{S}_K \gv g)}{L^2(K)}&\le\nm{\na(\gv g-\wh{\gv
    g}_m)}{L^2(K)} +\nm{\na(\wh{\gv g}_m-\mc{S}_K \gv
  g)}{L^2(K)}\\ &\le Ch_K^m\snm{\gv
  g}{H^{m+1}(K)}+Ch_K^{-1}\nm{\wh{\gv g}_m-\mc{S}_{K}
  g}{L^{2}(K)}\\ &\le Ch_K^m\snm{\gv g}{H^{m+1}(K)}+Ch_K^{-1}\nm{\gv
  g-\wh{\gv g}_m}{L^2(K)} +Ch_K^{-1}\nm{\gv g-\mc{S}_{K} \gv
  g}{L^{2}(K)}\\ &\le C\Lr{h_K^m+\Lam_{m}d_K^m}\snm{\gv
  g}{H^{m+1}(S(K))}.
\end{align*}
This gives~\eqref{eq:h1app}.

Combing the {\em Agmon inequality}~\eqref{eq:agmon}, ~\eqref{eq:l2app}
and ~\eqref{eq:h1app}, one has
\begin{align*}
\nm{\gv g-\mc{S}_K \gv g}{L^2(\partial K)}^2&\le C \Lr{h_K^{-1}\nm{\gv
    g-\mc{S}_K \gv g}{L^2(K)}^2 +h_K\nm{\na(\gv g_m-\mc{S}_K \gv
    g)}{L^2(K)}^2}\\ &\le Ch_K \Lam_{m}^2d_K^{2m}\snm{\gv
  g}{H^{m+1}(S(K))}^2.
\end{align*}
Taking the square root of both sides gives~\eqref{eq:traceapp}
completes the proof.
\end{proof}

A global reconstruction operator $\mc{S}$ is defined by
$\mc{S}|_K=\mc{S}_K$. Given $\mc{S}$, we embed $\gv{U}_h$ into a
piecewise discontinuous solenoidal polynomial finite element space
with its order to be $m$. The approximation space $\gv{V}_h$ is defined by
\[\gv{V}_h=\mc{S}\gv{U}_h.\]

Furthermore, the reconstruction operator $\mc{S}$ can be extended to
function space $[C^0(\Omega)]^2\cap \gv{S}^m(\Omega)$, and we still
denote by $\mc{S}$ without ambiguity,
\[
 \mc{S}: ~\gv{u} \mapsto \mc{S} \gv{u} = \mc S\tilde{\gv{u}}, \quad
 \forall \gv{u}\in [C^0(\Omega)]^2\cap \gv{S}^m(\Omega),
\]
where $\tilde{\gv{u}}\in \gv{U}_h$ and $\tilde{\gv{u}}(
\gv{x}_K)=\gv{u}( \gv{x}_K)$.

The basis function of $\gv{V}_h$ are given by the following
process. Define $e_K\in U_h$ to be the characteristic function
corresponding to $K$,
\begin{displaymath}
  e_{K}(x)=\begin{cases} 1,\ x \in K,\\ 0,\ x \notin K.\\
  \end{cases}
\end{displaymath}

Let $\gv{\lambda}_K$ denote the basis function and it is defined by
the reconstruction process:
\begin{displaymath}
  \gv{\lambda}_K=\begin{cases}
  \mc{S}[e_K,0],\ \text{x-component},\\ \mc{S}[0,e_K],\ \text{y-component}.\\
  \end{cases}
\end{displaymath}
The reconstruction operator can be wrote explicitly with the given
basis functions $\{\gv{\lambda}_K|\forall K\in \MTh\}$,
\begin{equation}\label{interpolation}
  \mathcal{S} \gv{u} = \sum_{K \in \MTh} \gv{u}(\gv{x}_{K}) *
  \gv{\lambda}_K, \quad \forall \gv{u} \in [C^0(\Omega)]^2\cap
  \gv{S}^m(\Omega) .
\end{equation}

Next, we will show the implementation of the proposed method by
an example on square domain $[0,1]\times[0,1]$. Here a third order
solenoidal field reconstruction is considered, the basis functions of
the corresponding solenoidal field $\gv{\xi}_j$ are listed as follows,
\[
\left(
     \begin{array}{c}
            1 \\ 0\\
     \end{array}
   \right), \left(
     \begin{array}{c}
             0\\ 1 \\
     \end{array}
\right), \left(
     \begin{array}{c}
             0\\ x \\
     \end{array}
\right), \left(
     \begin{array}{c}
            x \\ -y \\
     \end{array}
\right), \left(
     \begin{array}{c}
             y \\ 0 \\
     \end{array}
\right),
\]
\[
\left(
     \begin{array}{c}
            0 \\ x^2\\
     \end{array}
   \right), \left(
     \begin{array}{c}
             2 xy\\ -y^2 \\
     \end{array}
\right), \left(
     \begin{array}{c}
             x^2\\ -2xy \\
     \end{array}
\right), \left(
     \begin{array}{c}
            y^2 \\ 0\\
     \end{array}
\right),
\]
\[
\left(
     \begin{array}{c}
            0 \\ x^3\\
     \end{array}
   \right), \left(
     \begin{array}{c}
             3 xy^2\\ -y^3 \\
     \end{array}
\right), \left(
     \begin{array}{c}
             x^2y\\ -xy^2 \\
     \end{array}
\right), \left(
     \begin{array}{c}
             x^3\\ -3x^2y \\
     \end{array}
\right), \left(
     \begin{array}{c}
            y^3 \\ 0\\
     \end{array}
\right).
\]
The total degree of freedoms of the third order solenoidal field is
14, it means the size of element patch at least need to be taken as 7
to guarantee the solvability of the least square problem. Thus we take
$\# S(K)$ as 10 and the barycenter as the sampling node. Figure
\ref{tri_mesh_patch} shows domain and the corresponding triangulation,
\begin{figure}
  \begin{center}
    \includegraphics[width=0.4\textwidth]{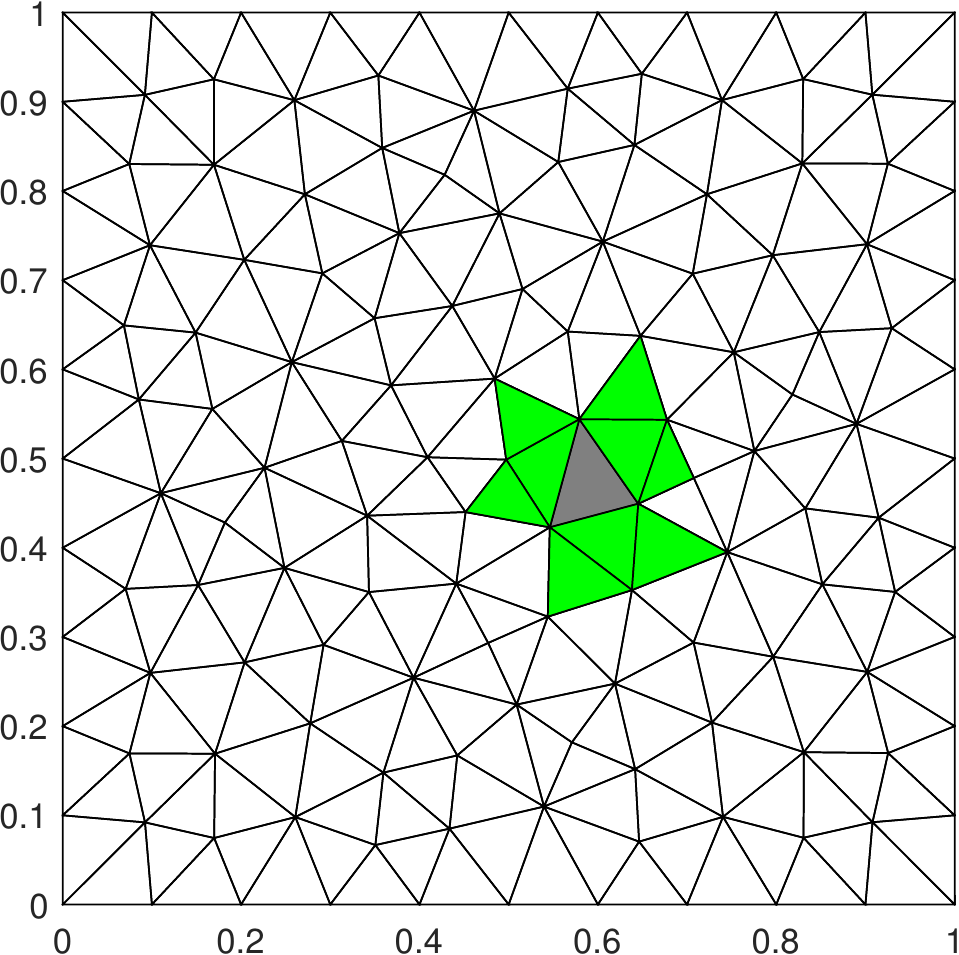}
    \includegraphics[width=0.4\textwidth]{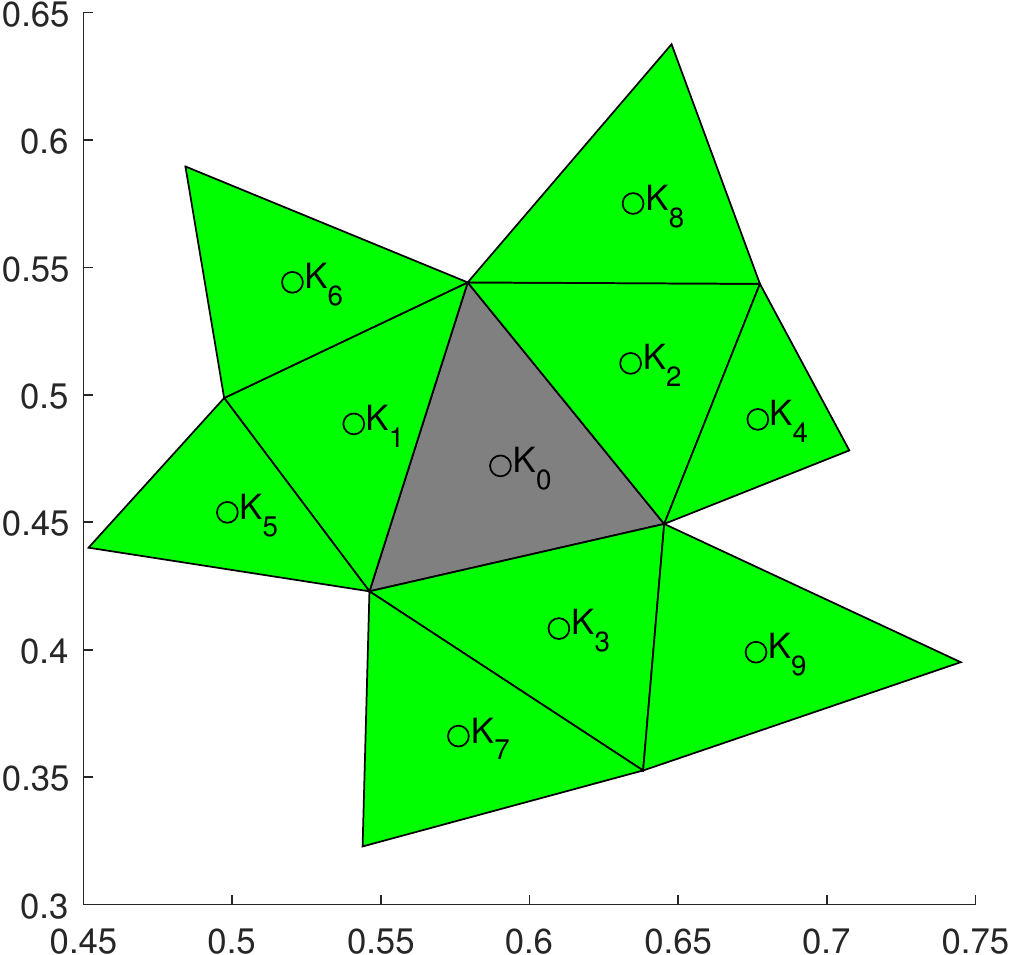}
    \caption{The triangulation (left)/element patch and the sampling
      nodes (right).}
  \label{tri_mesh_patch}
  \end{center}
\end{figure}

Here we take an element $K_0$ as a demonstration element, the element
patch of $K_0$ is taken as
\begin{displaymath}
  S(K_0) = \left\{ K_{0},\cdots, K_{i},\cdots ,K_{9} \right\},\quad
  i=1,2,\cdots,8.
\end{displaymath}
and the corresponding set of the sampling nodes is
\[
\mc{I}(K_0) = \left\{ (x_{K_0},y_{K_0})
,\cdots,(x_{K_i},y_{K_i}),\cdots ,(x_{K_9},y_{K_9})\right\},\quad
i=1,2,\cdots,8.
\]
The element patch and sampling nodes are shown in Figure
\ref{tri_mesh_patch}.

For a given function $\gv{g}=[g_1,g_2]^T\in \gv{S}^m(\Omega)$, the
least square problem \eqref{eq:leastsquares} is as
\begin{equation*}
  \mc S_{K_0} \gv{g}= \argmin_{\{s_j\}\in\mb{R}}
  \ \sum_{(x_{K'},y_{K'})\in
    \mc{I}_{K_0}}\abs{\gv{g}(x_{K'},y_{K'})-\sum_{j=1}^{14}s_j\gv{\xi}_j}^2.
\end{equation*}
The problem is solved directly by calculating the generalized inverse
of a matrix,
\begin{displaymath}
  [s_1,\cdots,s_j,\cdots,s_{14}]^T = (A^T A)^{-1} A^T b,\quad j=2,3,\cdots,13,
\end{displaymath}
where $A$ and $b$ are defined as follows
\[
A = \begin{bmatrix} 1 & 0 & 0 & x_{K_0} & y_{K_0}& \cdots\\ \vdots &
  \vdots & \vdots & \vdots & \vdots & \cdots\\ 1 & 0 & 0 & x_{K_i} &
  y_{K_i}& \cdots\\ \vdots & \vdots & \vdots & \vdots & \vdots &
  \cdots\\ 1 & 0 & 0 & x_{K_9} & y_{K_9}& \cdots\\ 0 & 1 & x_{K_0}
  &-y_{K_0} & 0 & \cdots\\ \vdots & \vdots & \vdots & \vdots & \vdots
  & \cdots\\ 0 & 1 & x_{K_i} &-y_{K_i} & 0 & \cdots\\ \vdots & \vdots
  & \vdots & \vdots & \vdots & \cdots\\ 0 & 1 & x_{K_9} &-y_{K_9} & 0
  & \cdots
  \end{bmatrix} ,\quad
  b = \begin{bmatrix} g_1(x_{K_0},y_{K_0}) \\ \vdots
    \\ g_1(x_{K_i},y_{K_i}) \\ \vdots \\ g_1(x_{K_9},y_{K_9})
    \\ g_2(x_{K_0},y_{K_0}) \\ \vdots \\ g_2(x_{K_i},y_{K_i})
    \\ \vdots \\ g_2(x_{K_9},y_{K_9})
  \end{bmatrix}
  \quad i=1,2,\cdots,8.
\]
$A$ is a $20\times 14$ matrix, limited by the page space, we only list
the first order part, the rest is easy to be complemented.  $b$ is a
$20\times 1$ vector.

Matrix $(A^T A)^{-1} A^T$ contains all the information of the basis
functions that are defined on element $K_0$, the matrix is relevant
with $\gv{\lambda}_{K_i},i=0,\cdots,9$. The basis function
$\gv{\lambda}_{K_0}$ is determined after the reconstruction are
conducted on each element. Figure \ref{tri_mesh_basis} shows the basis
function $\gv{\lambda}_{K_0}$. We note that the support of
$\gv{\lambda}_{K_0}$ is not equal to the element patch
$S(K_0)$. Insteadly, for any element $K\in\MTh$, the support of the
basis function $\gv{\lambda}_K$ is related with all the element
patches which includes $K$:
\begin{equation}\label{supp}
  \supp(\gv{\lambda}_K)= \bigcup_{K' \in \MTh, K \in S(K')} K'.
\end{equation}
\begin{figure}
  \begin{center}
    \includegraphics[width=0.4\textwidth]{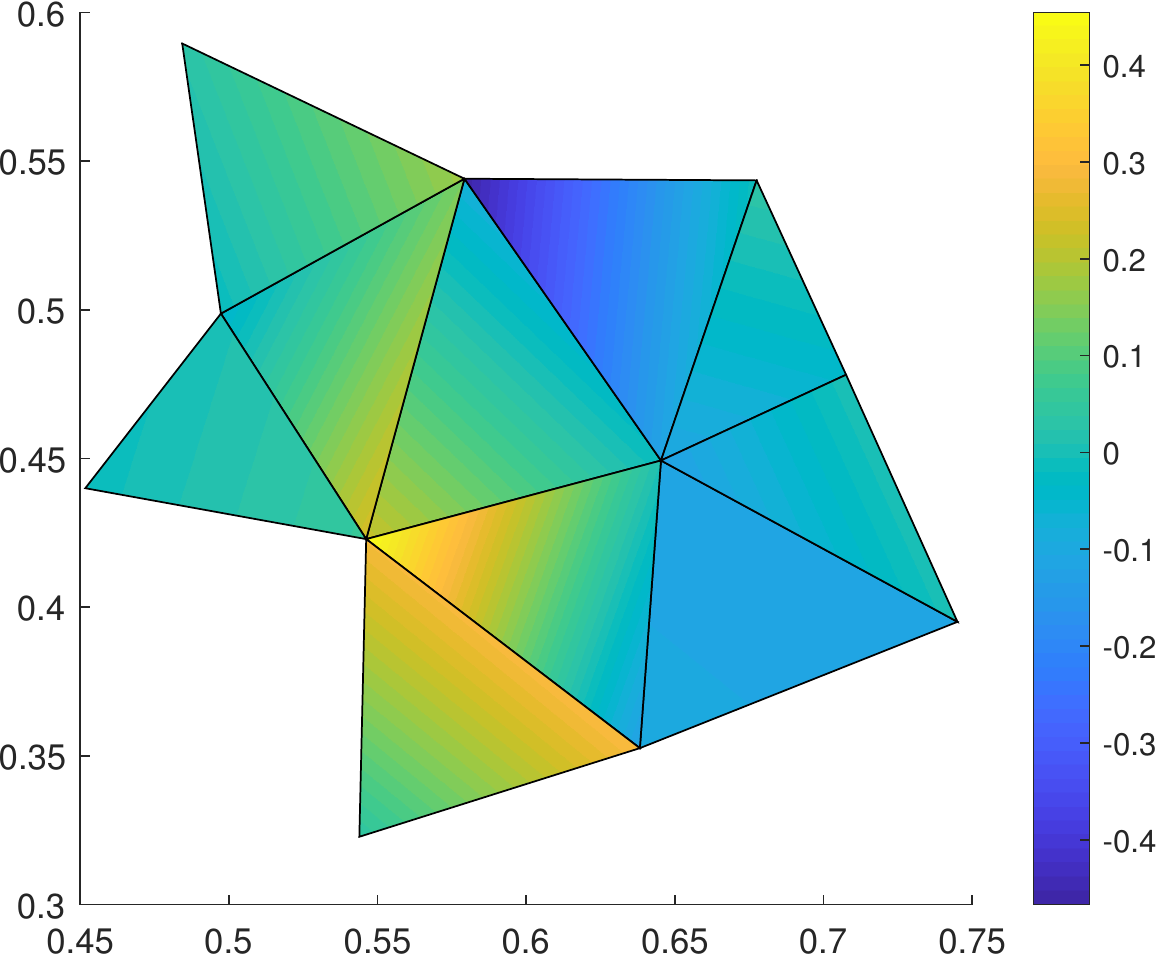}
    \includegraphics[width=0.4\textwidth]{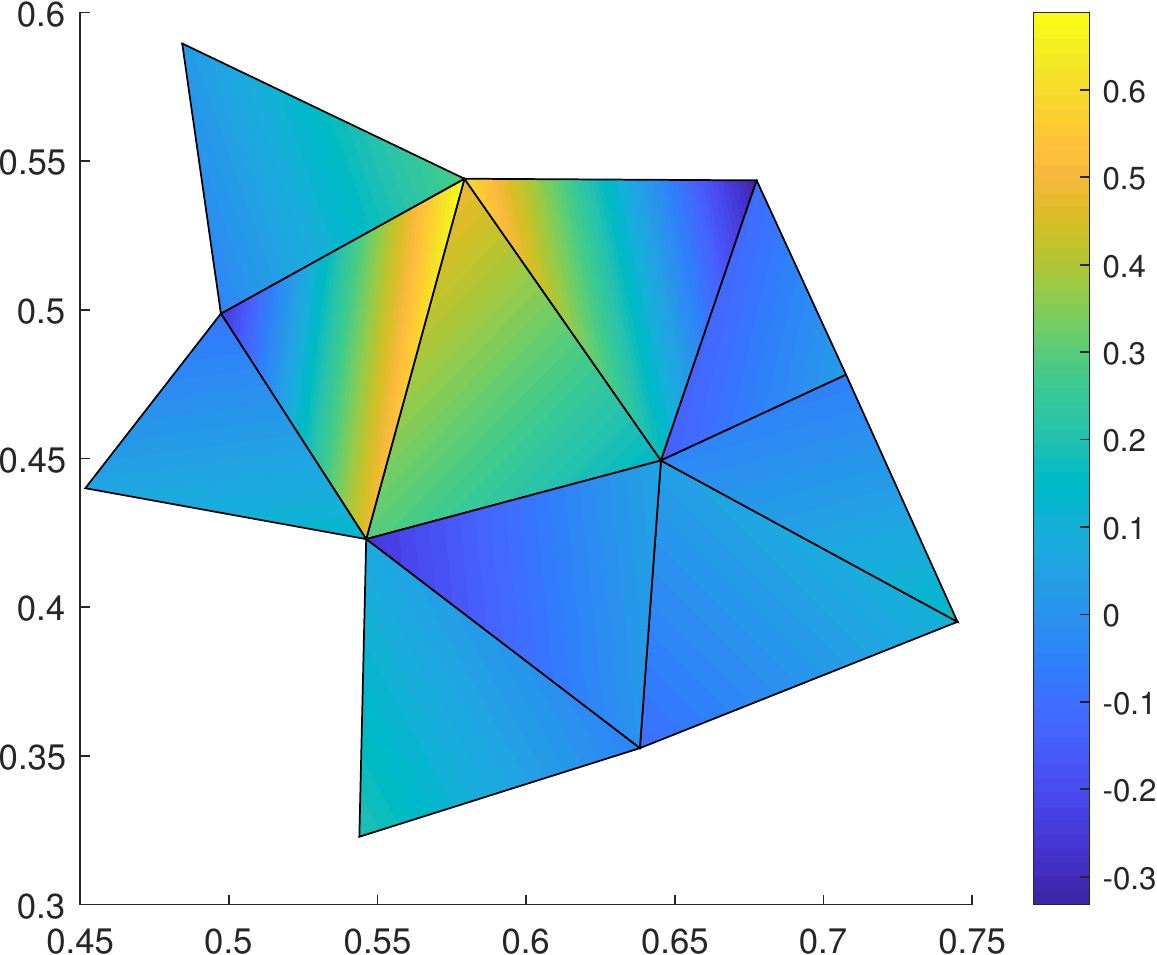}
    \caption{The x component basis function (left)/ y component basis
      function(right).}
  \label{tri_mesh_basis}
  \end{center}
\end{figure}

Due to the discontinuity of the reconstructed finite element space,
the DG method can be directly employed. We end this section by
introducing the average and jump operators which are common used in DG
method. Let $e$ be an interior edge shared by two adjacent elements
$e=\partial K^{+} \cap \partial K^{-}$. The corresponding unit outward
normal vector are denoted by $\gv{\mathrm n}^{+}$,
$\gv{\mathrm n}^{-}$.  Let $v$ and $\gv{v}$ be the scalar valued and
vector valued functions on $\mathcal T_h$, respectively. Following the
traditional DG notations, we define the $average$ operator
$\{ \cdot \}$ on element edges as follows:
\begin{displaymath}
\{v\}=\frac{1}{2}(v^{+} + v^{-}), \quad \{ \gv{v} \} =
\frac{1}{2}(\gv{v}^{+} + \gv{v}^{-}) , \quad\text{on }\ e\in\Gamma^0.
\end{displaymath}
with
$v^+=v|_{K^+},\ v^-=v|_{K^-},\ \gv v^+=\gv v|_{K^+},\ \gv v^-=\gv
v|_{K^-}$. The $jump$ operator $\lj \cdot \rj$ on element edges is as
\begin{displaymath}
  \begin{aligned}
    \lj v \rj =v^{+} \gv{\mathrm n}^{+} + v^{-} \gv{\mathrm n}^{-},
    \quad \lj \gv{v} \rj =\gv{v}^{+}\cdot \gv{\mathrm
      n}^{+}+\gv{v}^{-}\cdot \gv{\mathrm n}^{-}, \\ \lj \gv{v}
    \otimes\gv{\mathrm n} \rj =\gv{v}^{+}\otimes \gv{\mathrm
      n}^{+}+\gv{v}^{-}\otimes \gv{\mathrm n}^{-},\quad
    \text{on}\ e\in\Gamma^0.\\
\end{aligned}
\end{displaymath}
For $e \in \partial \Omega$, we set
\begin{displaymath}
  \begin{aligned}
    \{v\}=v&,\quad \{\gv v\}=\gv v,\quad \lj v \rj=v\gv{\mathrm n},
    \\ \lj \gv v \rj=\gv v\cdot\gv{\mathrm n}&, \quad \lj
    \gv{v}\otimes \gv{\mathrm n} \rj = \gv{v}\otimes \gv{\mathrm n}
    ,\quad\text{on}\ e\in\partial\Omega.\\
  \end{aligned}
\end{displaymath}

\section{Approximation of Stokes Equation}\label{sec:weakform}
We consider the Stokes equation with Dirichlet boundary condition:
\begin{equation}\label{eq:gov}
  \left\{\begin{array}{ll}- \Delta \gv{u} + \nabla p=\gv{f} &
  \mathrm{in}\,\, \Omega, \\[1.5ex] \div \gv{u} = 0 & \mathrm{in }\,\,
  \Omega, \\[1.5ex] \gv{u} = \gv{g} & \mathrm{on }\,\, \partial\Omega,
  \\[1.5ex]
\end{array}\right.
\end{equation}
where $\gv{u}$ is the velocity field, $p$ is the pressure, and
$\gv{g}$ is the given boundary value satisfying the compatibility
condition $(\gv{g}\cdot \gv{n},1)_{\partial \Omega} = 0$. We take the
local divergence-free reconstruction space $\gv{V}_h$ as the trial and
test function spaces. The approximation problem of \eqref{eq:gov} is:
{\it Seek $ \gv{u}_h \in \gv{V}_h$, such that
\begin{equation}\label{eq:weakform}
B_h(\gv{u}_h,\gv{v}_h) = F_h(\gv{v}_h), \quad \forall \gv{v}_h \in
\gv{V}_h,
\end{equation}
where the bilinear form is defined by
\begin{equation}\label{eq:bilinear_operator}
\begin{split}
  B_h(\gv{u}_h,\gv{v}_h) = & \sum_{K\in \mathcal{T}_h} ( \grad
  \gv{u}_h,\grad \gv{v}_h) - (\lj \gv{u}_h \otimes \gv{n} \rj ,\{\grad
  \gv{v}_h\})_{\Gamma_h} - (\{\grad \gv{u}_h\}, \lj \gv{v}_h \otimes
  \gv{n} \rj )_{\Gamma_h} \\ &+\eta (\lj \gv{u}_h \otimes \gv{n}\rj,
  \lj \gv{v}_h \otimes \gv{n}\rj)_{\Gamma_h} +\epsilon (\lj \gv{u}_h
  \cdot \gv{n} \rj , \lj \gv{v}_h \cdot \gv{n}\rj)_{\Gamma_h},
\end{split}
\end{equation}
and
\begin{equation}\label{eq:linear_operator}
F_h(\gv{v}_h) = \sum_{K\in \mathcal{T}_h} ( \gv{f} , \gv{v}_h ) -
(\gv{g}, \grad \gv{v}_h \cdot \gv{n})_{\partial \Omega} + \eta
(\gv{g},\gv{v}_h)_{\partial \Omega} + \epsilon ( \gv{g} \cdot \gv{n} ,
\gv{v}_h \cdot \gv{n})_{\partial \Omega},
\end{equation}
where $\eta$ and $\epsilon$ are penalty parameters, corresponding to
different penalty term, respectively.  }

We note that the weak form \eqref{eq:weakform} is inconsistent
\cite{hansbo2008piecewise}. Precisely, the solution $\gv{u}$ of the
equation \eqref{eq:gov} does not satisfy the weak form
\eqref{eq:weakform}, saying
\[
B_h(\gv{u},\gv{v}_h)\neq F_h(\gv{v}_h) , \quad \forall \gv{v}_h \in
\gv{V}_h.
\]
The inconsistency is caused by the discontinuous of normal component
$\gv{v}_h \cdot \gv{n}$ on interior element edges. Instead of
satisfying the weak form \eqref{eq:weakform}, $\gv{u}$ satisfies
\begin{equation}\label{eq:inconsistent}
B_h(\gv{u},\gv{v}_h)=F_h( \gv{f} , \gv{v}_h) -(p , \lj \gv{v}_h \cdot
\gv{n} \rj )_{\Gamma_h}, \quad \forall \gv{v}_h \in \gv{V}_h
\end{equation}
Therefore, for the interior penalty scheme, the non-consistent penalty
parameter $\epsilon$ need to be taken great enough to control the
consistency error which is similar to the interior penalty method for
elliptic equation \cite{arnold2002unified}. Meanwhile, the penalty
parameter $\eta$ is chosen to guarantee the stability of the operator
$B_h(\cdot,\cdot)$. The magnitude of penalty parameters are taken as
follows,
\begin{equation}\label{penalty}
  \epsilon=\mathrm{O}(h^{-(m+1)}), \quad \eta=\mathrm{O}(h^{-1}),
\end{equation}
where $m$ is the degree of polynomials in $\gv{V}_h$.

Let us define some mesh depended semi-norms, for $\forall \gv{v}_h \in
\gv{V}_h$,
\begin{equation*}
\begin{split}
|\gv{v}_h|_{1,h}^{2} &:= \sum_{K\in
  \mathcal{T}_h}|\gv{v}_h|_{1,K}^{2}, \qquad |\gv{v}_h|_{*}^{2} :=
\sum_{e\in \Gamma_h} h_e^{-1} \|\lj \gv{v}_h \otimes \gv{n}\rj
\|_{L^2(e)}^2 ,\\ \ |\gv{v}_h|_{\diamond}^{2} &:= \sum_{e\in \Gamma_h}
h_e^{-(m+1)}\|\lj \gv{v}_h\cdot \gv{n} \rj \|_{L^2(e)}^2,
\end{split}
\end{equation*}
and introduce the energy norm,
\begin{equation}\label{eq:energy_norm}
  \|\gv{v_h}\|_{h}^2 := |\gv{v}_h|^2_{1,h}+ |\gv{v}_h|^2_{*} +
  |\gv{v}_h|^2_{\diamond}.
\end{equation}
By the results in the previous section, we instantly have following
approximation estimate about energy norm,
\begin{lemma}\label{lemma:approximate_energy_error}
  For $\gv{u}\in [H^{m+1}(\Omega)]^2$ to be the solution of equation
  \eqref{eq:gov}, and $\gv{u}_I\in \gv{V}_h$ to be the interpolation
  of $\gv{u}$, there exists a constant $C$ that depends on $N$,
  $\sigma$, $\gamma$ and $m$, such that
\begin{equation}\label{eq:approx_energy_error}
\|\gv{u}-\gv{u}_I\|_h \leq C
(h^{\frac{m}{2}}+\Lambda_{m}d^{\frac{m}{2}}) | \gv{u}
|_{H^{m+1}(\Omega)} .
\end{equation}
\end{lemma}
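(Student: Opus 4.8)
The plan is to use the additive structure of the energy norm \eqref{eq:energy_norm} and bound the three seminorms $|\gv u-\gv u_I|_{1,h}$, $|\gv u-\gv u_I|_{*}$ and $|\gv u-\gv u_I|_{\diamond}$ separately, reducing each to the local interpolation estimates \eqref{eq:l2app}, \eqref{eq:h1app} and \eqref{eq:traceapp}. Since $\gv u$ solves \eqref{eq:gov} it is divergence-free, and for $m\ge1$ the embedding $H^{m+1}(\Omega)\hookrightarrow C^0$ in two dimensions makes $\gv u$ continuous, so $\gv u\in[C^0(\Omega)]^2\cap\gv S^{m+1}(\Omega)$ and those local estimates apply with $\gv g=\gv u$. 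Because $\gv u_I=\mc S\gv u$ with $\mc S|_K=\mc S_K$, on each element $\gv u-\gv u_I=\gv u-\mc S_K\gv u$, so the volume part is handled directly and only the edge terms need a jump reduction.

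First I would bound the broken gradient seminorm: $|\gv u-\gv u_I|_{1,h}^2=\sum_{K\in\MTh}\nm{\na(\gv u-\mc S_K\gv u)}{L^2(K)}^2$, and \eqref{eq:h1app} applied termwise gives a factor $\Lr{h^m+\Lam_m d^m}^2$. The pure polynomial piece $h^m$ here, bounded by $h^{m/2}$, is the origin of the first term in \eqref{eq:approx_energy_error}. For the tensor-jump seminorm $|\cdot|_{*}$ I would use the global continuity of $\gv u$, so that on every interior edge $\lj\gv u\otimes\gv n\rj=0$; expressing $\lj(\gv u-\gv u_I)\otimes\gv n\rj$ through the two element traces and using $\abs{\gv n}=1$, the edge norm is controlled by the one-sided errors $\nm{\gv u-\mc S_{K^{\pm}}\gv u}{L^2(e)}$. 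Summing $h_e^{-1}$ times \eqref{eq:traceapp} over the edges yields a bound of order $\Lam_m^2 d^{2m}$ for $|\gv u-\gv u_I|_{*}^2$, which is of higher order than the target.

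The crux is the normal-jump seminorm $|\cdot|_{\diamond}$, whose weight $h_e^{-(m+1)}$ is strongly singular. Reducing the jump to one-sided traces exactly as for $|\cdot|_{*}$ and inserting \eqref{eq:traceapp} produces on each element the factor $h_K^{-(m+1)}\cdot\Lam_m^2 h_K d_K^{2m}=\Lam_m^2 h_K^{-m}d_K^{2m}$. I expect this to be the main obstacle: a naive bound leaves a negative power of $h$, so the singular weight must be compensated. The remedy is to control the ratio $d_K/h_K$ of patch diameter to element size; using {\bf A1}, {\bf A2}, {\bf M1} and the fixed cardinality of $S(K)$, neighbouring elements are comparable and $d_K\le\rho h_K$, whence $h_K^{-m}d_K^{2m}\le\rho^m d_K^{m}\le\rho^m d^{m}$. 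This gives, after the overlap step below, $|\gv u-\gv u_I|_{\diamond}^2\le C\Lam_m^2 d^{m}\snm{\gv u}{H^{m+1}(\Omega)}^2$, i.e. the dominant contribution $\Lam_m d^{m/2}$.

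Finally I would globalize the patchwise sums. Each element lies in only a bounded number of patches $S(K')$ (finite overlap, again from {\bf A1}, {\bf M1} and the fixed patch size), so $\sum_{K\in\MTh}\snm{\gv u}{H^{m+1}(S(K))}^2\le C\snm{\gv u}{H^{m+1}(\Omega)}^2$ with $C$ depending only on $N$, $\sigma$, $\gamma$ and the patch cardinality. Adding the three seminorm bounds, taking a square root and absorbing the higher-order pieces (legitimate since $\Lam_m\ge1$ and $h\le d$) then yields \eqref{eq:approx_energy_error}.
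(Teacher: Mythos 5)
Your proof is correct and takes essentially the same approach as the paper, which in fact states this lemma with no proof at all (``By the results in the previous section, we instantly have\ldots''): splitting $\|\cdot\|_h$ into its three seminorms, applying \eqref{eq:h1app} and \eqref{eq:traceapp} elementwise, and summing via the finite overlap of the patches is precisely the intended argument. Your identification of the $|\cdot|_{\diamond}$ term as the crux is also the right call: its weight $h_e^{-(m+1)}$ can only be absorbed through the local quasi-uniformity bound $d_K\le\rho h_K$, a hypothesis the paper never states explicitly but uses implicitly (for instance when it later writes \eqref{eq:interp}, absorbing $\Lambda_m d^{m/2}$ into $Ch^{m/2}$).
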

Here we need to clarify that the approximation error is not optimal,
which is due to the fact that the BDM space is not a subspace of the
approximation space $\gv{V}_h$. The third term of energy norm is
dominant in the approximation error. This fact makes the numerical
solution not able to attain the optimal accuracy order.

For the consistency error, we have following estimate,
\begin{lemma}\label{lemma:inconsistent}
  For $\forall \gv{v}_{h} \in \gv{V}_h$ and $p\in H^1(\Omega)$, we
  have
\begin{equation}\label{eq:inconsistent_error}
\sum_{e\in \Gamma_h} \int_{e} p \lj \gv{v}_{h} \cdot \gv{n} \rj\,ds
\leq C h^\frac{m}{2} \|p\|_{H^{1}(\Omega)} \|\gv{v}_{h}\|_h.
\end{equation}
\end{lemma}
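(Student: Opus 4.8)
My plan is to bound the edge sum by a single application of the Cauchy--Schwarz inequality arranged so that the second factor is exactly the $\diamond$-part of the energy norm, and then to control the remaining ``pressure factor'' by the Agmon trace inequality~\eqref{eq:agmon}. The starting observation is that since $p\in H^1(\Omega)$, its trace is single-valued on every interior edge $e\in\Gamma_h^0$ (and is just the boundary trace on $e\subset\partial\Omega$), so each integrand $p\,\lj\gv{v}_h\cdot\gv{n}\rj$ is well defined with $p$ treated as one function on $e$. I note in passing that the local divergence-free property $\int_{\partial K}\gv{v}_h\cdot\gv{n}=0$ is \emph{not} needed here: the strong penalty weight $h_e^{-(m+1)}$ built into $|\gv{v}_h|_\diamond$ already supplies enough negative powers of $h$ to close the estimate with only a trace bound on $p$.

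The first step is to split the weight $h_e^{\pm(m+1)/2}$ across the two factors on each edge. Writing, after Cauchy--Schwarz on $e$,
\begin{equation*}
\sum_{e\in\Gamma_h}\int_e p\,\lj\gv{v}_h\cdot\gv{n}\rj\,\ds
\le\sum_{e\in\Gamma_h}\Lr{h_e^{(m+1)/2}\nm{p}{L^2(e)}}\Lr{h_e^{-(m+1)/2}\nm{\lj\gv{v}_h\cdot\gv{n}\rj}{L^2(e)}},
\end{equation*}
and then applying the discrete Cauchy--Schwarz inequality over the edges, I obtain the product
\begin{equation*}
\Lr{\sum_{e\in\Gamma_h}h_e^{m+1}\nm{p}{L^2(e)}^2}^{1/2}
\Lr{\sum_{e\in\Gamma_h}h_e^{-(m+1)}\nm{\lj\gv{v}_h\cdot\gv{n}\rj}{L^2(e)}^2}^{1/2}.
\end{equation*}
The second factor is precisely $|\gv{v}_h|_\diamond$, which is dominated by $\|\gv{v}_h\|_h$ by the definition~\eqref{eq:energy_norm} of the energy norm, so it remains only to estimate the pressure factor.

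For the pressure factor I would pass from edges to elements, using that each edge lies on the boundary of at most two elements with $h_e\le h_K$, to get $\sum_{e}h_e^{m+1}\nm{p}{L^2(e)}^2\le\sum_{K\in\MTh}h_K^{m+1}\nm{p}{L^2(\partial K)}^2$. Applying the Agmon inequality~\eqref{eq:agmon} to $p\in H^1(K)$ bounds each boundary term by $C\Lr{h_K^{-1}\nm{p}{L^2(K)}^2+h_K\nm{\na p}{L^2(K)}^2}$, so the element sum is controlled by $C\sum_K\Lr{h_K^{m}\nm{p}{L^2(K)}^2+h_K^{m+2}\nm{\na p}{L^2(K)}^2}\le Ch^m\nm{p}{H^1(\Omega)}^2$, where the last inequality uses $h_K\le h$ and the boundedness of the domain to absorb $h_K^2$ into the constant. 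Taking the square root gives the pressure factor $\le Ch^{m/2}\|p\|_{H^1(\Omega)}$, and combining with $|\gv{v}_h|_\diamond\le\|\gv{v}_h\|_h$ yields~\eqref{eq:inconsistent_error}. The only delicate point is the bookkeeping of the $h$-powers: one must check that the weight chosen in the Cauchy--Schwarz split is exactly $(m+1)/2$ so that the trace inequality's $h^{-1}$ factor combines with $h^{m+1}$ to leave the dominant scaling $h^m$, landing precisely on the stated $h^{m/2}$ (rather than a suboptimal or over-optimistic power), and that the boundary edges are handled consistently with interior edges.
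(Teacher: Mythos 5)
Your proposal is correct and follows essentially the same route as the paper's own proof: split the weight $h^{\pm(m+1)/2}$ across the two factors, apply Cauchy--Schwarz so that the jump factor becomes exactly $|\gv{v}_h|_\diamond\le\|\gv{v}_h\|_h$, and control the weighted edge norms of $p$ by the Agmon inequality~\eqref{eq:agmon}, yielding the factor $Ch^{m/2}\|p\|_{H^1(\Omega)}$. Your write-up is in fact more careful than the paper's (explicit edge-to-element passage and power bookkeeping), but the underlying argument is identical.
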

\begin{proof}
By {\em Agmon inequality}~\eqref{eq:agmon}, we have
\begin{equation*}
\begin{split}
\sum_{e\in \Gamma_h} \int_{e}p \lj \gv{v}_{h} \cdot \gv{n} \rj \,ds &
\leq \sum_{e\in \Gamma_h} \int_{e} h^{\frac{m+1}{2}} p
h^{-\frac{m+1}{2}} \lj \gv{v}_{h} \cdot \gv{n} \rj \,ds \\ & \leq C
h^{\frac{m}{2}} \sum_{K \in \mathcal{T}_h} \Lr{ \|p\|_{L^{2}(K)}^2+h^2
  \|\nabla p\|_{K}^2}^{\frac{1}{2}} \sum_{e\in \Gamma_h}
\Lr{h^{-(m+1)} \|\lj \gv{v}_{h} \cdot \gv{n} \rj\|_{L^2(e)}^2
}^{\frac{1}{2}} \\ & \leq C h^{\frac{m}{2}} \|p\|_{H^{1}(\Omega)}
\|\gv{v}_{h}\|_h
\end{split}
\end{equation*}

\end{proof}
Next, the boundedness and the stability of the operator
$B_h(\cdot,\cdot)$ can be claimed as
\begin{lemma}\label{bounded}
For $\forall \gv{u}_{h} \in \gv{V}_h$ , $\forall \gv{v}_{h} \in
\gv{V}_h$, and sufficiently large $\eta$ and $\epsilon$ we have
\begin{equation}
\begin{split}
  B_h(\gv{u}_{h},\gv{v}_{h})& \leq C \|\gv{u}_{h}\|_h\|\gv{v}_{h}\|_h
  ,\\ B(\gv{v}_{h},\gv{v}_{h}) & \geq C \|\gv{v}_{h}\|_h^2.
\end{split}
\end{equation}
\end{lemma}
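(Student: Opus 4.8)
The plan is to treat the bilinear form \eqref{eq:bilinear_operator} exactly as in the standard interior penalty analysis, using only Cauchy--Schwarz, Young's inequality, and the discrete trace inequality \textbf{M5}. The local divergence-free constraint plays no role here: boundedness and coercivity concern only the algebraic structure of $B_h$ relative to the energy norm \eqref{eq:energy_norm}, so I would carry out the argument edge-by-edge and term-by-term.

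For boundedness I would estimate the five terms of $B_h(\gv{u}_h,\gv{v}_h)$ separately. The volume term is bounded by $|\gv{u}_h|_{1,h}\,|\gv{v}_h|_{1,h}$ via Cauchy--Schwarz. The two penalty terms are, in view of the scaling \eqref{penalty}, directly comparable to $|\cdot|_*^2$ and $|\cdot|_\diamond^2$, hence bounded by $C\,|\gv{u}_h|_*\,|\gv{v}_h|_*$ and $C\,|\gv{u}_h|_\diamond\,|\gv{v}_h|_\diamond$ respectively. The essential terms are the two consistency couplings, e.g. $(\lj\gv{u}_h\otimes\gv{n}\rj,\{\grad\gv{v}_h\})_{\Gamma_h}$. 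On each edge I would insert the balancing factors $h_e^{\pm 1/2}$ and apply Cauchy--Schwarz to produce $h_e^{-1/2}\|\lj\gv{u}_h\otimes\gv{n}\rj\|_{L^2(e)}\cdot h_e^{1/2}\|\{\grad\gv{v}_h\}\|_{L^2(e)}$. Since $\grad\gv{v}_h$ is a piecewise polynomial of degree $m-1$, the discrete trace inequality \eqref{eq:traceinv} gives $h_e^{1/2}\|\{\grad\gv{v}_h\}\|_{L^2(e)}\le C(\|\grad\gv{v}_h\|_{L^2(K^+)}+\|\grad\gv{v}_h\|_{L^2(K^-)})$; summing over edges and using finite overlap then yields the bound $C\,|\gv{u}_h|_*\,|\gv{v}_h|_{1,h}$. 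Collecting the five contributions and bounding each seminorm by $\|\cdot\|_h$ gives $B_h(\gv{u}_h,\gv{v}_h)\le C\|\gv{u}_h\|_h\|\gv{v}_h\|_h$.

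For coercivity I would set $\gv{u}_h=\gv{v}_h$, so that the two consistency couplings collapse into a single term:
\[
B_h(\gv{v}_h,\gv{v}_h)=|\gv{v}_h|_{1,h}^2-2(\lj\gv{v}_h\otimes\gv{n}\rj,\{\grad\gv{v}_h\})_{\Gamma_h}+\eta\|\lj\gv{v}_h\otimes\gv{n}\rj\|_{L^2(\Gamma_h)}^2+\epsilon\|\lj\gv{v}_h\cdot\gv{n}\rj\|_{L^2(\Gamma_h)}^2.
\]
The cross term is the only one without a definite sign, and I would control it by Young's inequality with a free parameter $\delta>0$. Edge-scaled Cauchy--Schwarz gives $2|(\lj\gv{v}_h\otimes\gv{n}\rj,\{\grad\gv{v}_h\})_{\Gamma_h}|\le\delta\,|\gv{v}_h|_*^2+\delta^{-1}\sum_{e\in\Gamma_h}h_e\|\{\grad\gv{v}_h\}\|_{L^2(e)}^2$, and the trace estimate above bounds the last sum by $C_{tr}\delta^{-1}|\gv{v}_h|_{1,h}^2$. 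Using \eqref{penalty} to identify $\eta\|\lj\gv{v}_h\otimes\gv{n}\rj\|^2\ge c_\eta|\gv{v}_h|_*^2$ and $\epsilon\|\lj\gv{v}_h\cdot\gv{n}\rj\|^2\ge c_\epsilon|\gv{v}_h|_\diamond^2$, one arrives at
\[
B_h(\gv{v}_h,\gv{v}_h)\ge\left(1-C_{tr}\delta^{-1}\right)|\gv{v}_h|_{1,h}^2+\left(c_\eta-\delta\right)|\gv{v}_h|_*^2+c_\epsilon|\gv{v}_h|_\diamond^2.
\]
Choosing $\delta=2C_{tr}$ fixes the first coefficient at $1/2$, and then any $\eta$ with $c_\eta>\delta$ makes the $|\cdot|_*^2$ coefficient positive, giving $B_h(\gv{v}_h,\gv{v}_h)\ge C\|\gv{v}_h\|_h^2$. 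This is exactly the meaning of ``sufficiently large $\eta$''; I note that $\epsilon$ need only be positive for coercivity, its largeness being dictated instead by the consistency error in Lemma \ref{lemma:inconsistent}.

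I expect the only delicate point to be the edge-to-element bookkeeping. The discrete trace inequality \eqref{eq:traceinv} is an elementwise statement about the polynomial $\grad\gv{v}_h$, so I must pass from the per-edge sum $\sum_{e}h_e\|\{\grad\gv{v}_h\}\|_{L^2(e)}^2$ to the broken seminorm $|\gv{v}_h|_{1,h}^2$ with an $h$-independent constant. This is where the shape-regularity assumptions \textbf{A1}, \textbf{A2} and the comparability $h_e\sim h_{K^\pm}$ furnished by \textbf{M1} enter, guaranteeing that each element borders only a uniformly bounded number of edges so the constant $C_{tr}$ does not degenerate. Everything else reduces to routine applications of Cauchy--Schwarz and Young's inequality.
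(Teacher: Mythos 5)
Your proof is correct. Note that the paper itself states Lemma \ref{bounded} \emph{without any proof} (the text passes directly from the statement to the error estimate), implicitly deferring to the standard interior-penalty literature it cites (\cite{arnold2002unified}, \cite{hansbo2008piecewise}); your term-by-term Cauchy--Schwarz/Young argument, with the discrete trace inequality \eqref{eq:traceinv} handling the consistency couplings and the threshold condition on $\eta$ emerging from the Young parameter, is exactly the standard argument the paper is relying on, so you have supplied the missing proof rather than an alternative to one. One caveat worth recording: passing from the global scalings \eqref{penalty} to the edge-local identifications $\eta\,\|\lj\gv{v}_h\otimes\gv{n}\rj\|_{L^2(e)}^2\ge c_\eta\,h_e^{-1}\|\lj\gv{v}_h\otimes\gv{n}\rj\|_{L^2(e)}^2$ and $\epsilon\,\|\lj\gv{v}_h\cdot\gv{n}\rj\|_{L^2(e)}^2\ge c_\epsilon\,h_e^{-(m+1)}\|\lj\gv{v}_h\cdot\gv{n}\rj\|_{L^2(e)}^2$ needs $h/h_e$ uniformly bounded (quasi-uniformity) or an edge-wise choice of the penalties; this gap is inherited from the paper's own formulation of \eqref{penalty}, not introduced by you. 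Your closing observation about $\epsilon$ is also right when read as a statement about the prefactor in $\epsilon=\epsilon_0 h^{-(m+1)}$: the $h^{-(m+1)}$ scaling is forced by the definition of $|\cdot|_\diamond$, but coercivity imposes no threshold on $\epsilon_0$, whereas $\eta$ must genuinely exceed a constant determined by the trace constant.
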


Now, with the above lemmas, we are ready to give the error estimate of
the numerical solution \eqref{eq:weakform},
\begin{theorem}\label{thm:energy_error}
  For $\gv{u}\in [H^{m+1}(\Omega)]^2$, $p\in H^1(\Omega)$ to be the
  solution of equation \eqref{eq:gov}, and $\gv{u}_h\in \gv{V}_h$ to
  be the solution of equation \eqref{eq:weakform}, we then have
\begin{equation}\label{energy_error_estimate}
\|\gv{u}-\gv{u}_h\|_h \leq C h^{\frac{m}{2}} \Lr{| \gv{u}
  |_{H^{m+1}(\Omega)}+ \|p\|_{H^{1}(\Omega)}}.
\end{equation}
Furthermore, the $L^2$ error estimate is as
\begin{equation}\label{L2_error_estimate}
\begin{split}
\|\gv{u} - \gv{u}_h \|_{L^2(\Omega)} &\leq C h \Lr{| \gv{u}
  |_{H^{m+1}(\Omega)}+ \|p\|_{H^{1}(\Omega)}},\ m=1, \\ \|\gv{u} -
\gv{u}_h \|_{L^2(\Omega)} &\leq C h^{\frac{m}{2}+1}\Lr{ | \gv{u}
  |_{H^{m+1}(\Omega)}+ \|p\|_{H^{1}(\Omega)}}, \ m\geq 2 .
\end{split}
\end{equation}
\end{theorem}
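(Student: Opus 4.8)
The proof naturally splits into the energy-norm bound \eqref{energy_error_estimate} and the $L^2$ bound \eqref{L2_error_estimate}. The former is a routine consequence of the three lemmas already established, so I would dispatch it first and spend the effort on the duality argument behind the latter. Write $\gv{e}=\gv{u}-\gv{u}_h=(\gv{u}-\gv{u}_I)+(\gv{u}_I-\gv{u}_h)$ with $\gv{u}_I\in\gv{V}_h$ the reconstruction interpolant. Since $\gv{u}_I-\gv{u}_h\in\gv{V}_h$, the coercivity in Lemma \ref{bounded} gives $C\nm{\gv{u}_I-\gv{u}_h}{h}^2\le B_h(\gv{u}_I-\gv{u}_h,\gv{u}_I-\gv{u}_h)$. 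I would then split the right-hand side as $B_h(\gv{u}_I-\gv{u},\gv{u}_I-\gv{u}_h)+B_h(\gv{u}-\gv{u}_h,\gv{u}_I-\gv{u}_h)$ and invoke the error equation obtained by subtracting \eqref{eq:weakform} from the inconsistency relation \eqref{eq:inconsistent}, namely $B_h(\gv{u}-\gv{u}_h,\gv{v}_h)=-(p,\lj\gv{v}_h\cdot\gv{n}\rj)_{\Gamma_h}$ for all $\gv{v}_h\in\gv{V}_h$. The first summand is controlled by the boundedness of $B_h$ (extended to $\gv{V}_h+\gv{H}^{m+1}(\Om)$) together with the interpolation bound of Lemma \ref{lemma:approximate_energy_error}, the second by the consistency estimate of Lemma \ref{lemma:inconsistent}. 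Dividing by $\nm{\gv{u}_I-\gv{u}_h}{h}$ and restoring $\nm{\gv{u}-\gv{u}_I}{h}$ by the triangle inequality yields $\nm{\gv{e}}{h}\le C(h^{m/2}+\Lam_m d^{m/2})\snm{\gv{u}}{H^{m+1}(\Om)}+Ch^{m/2}\nm{p}{H^1(\Om)}$; using shape regularity so that $d=\mathrm{O}(h)$ and $\Lam_m$ is uniformly bounded collapses the bracket to $h^{m/2}$, giving \eqref{energy_error_estimate}.

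For the $L^2$ bound I would run an Aubin--Nitsche duality, exploiting that the Stokes operator is self-adjoint. Let $(\gv{\phi},\psi)$ solve the dual Stokes problem $-\Delta\gv{\phi}+\na\psi=\gv{e}$, $\div\gv{\phi}=0$ in $\Om$, $\gv{\phi}=\gv{0}$ on $\pa\Om$, which by elliptic regularity satisfies $\nm{\gv{\phi}}{H^2(\Om)}+\nm{\psi}{H^1(\Om)}\le C\nm{\gv{e}}{L^2(\Om)}$. Testing against $\gv{e}$ and integrating by parts element by element, using that $\gv{\phi}$ and $\psi$ are continuous, that $\gv{\phi}$ vanishes on $\pa\Om$ (so every jump of $\gv{\phi}$ is zero), and that $\div\gv{e}=0$ on each $K$, I expect the identity
\[
\nm{\gv{e}}{L^2(\Om)}^2=B_h(\gv{e},\gv{\phi})+(\lj\gv{e}\cdot\gv{n}\rj,\psi)_{\Gamma_h},
\]
the second term arising precisely because $\gv{e}$ is only elementwise solenoidal. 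Inserting the reconstruction interpolant $\gv{\phi}_I\in\gv{V}_h$ of $\gv{\phi}$, applying the error equation $B_h(\gv{e},\gv{\phi}_I)=-(p,\lj\gv{\phi}_I\cdot\gv{n}\rj)_{\Gamma_h}$, and noting that $\lj\gv{\phi}\cdot\gv{n}\rj=0$ so $\lj\gv{\phi}_I\cdot\gv{n}\rj=\lj(\gv{\phi}_I-\gv{\phi})\cdot\gv{n}\rj$, I would arrive at
\[
\nm{\gv{e}}{L^2(\Om)}^2=B_h(\gv{e},\gv{\phi}-\gv{\phi}_I)+(p,\lj(\gv{\phi}-\gv{\phi}_I)\cdot\gv{n}\rj)_{\Gamma_h}+(\lj\gv{e}\cdot\gv{n}\rj,\psi)_{\Gamma_h}=:T_1+T_2+T_3.
\]

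The three terms would be bounded using the energy estimate for $\nm{\gv{e}}{h}$ just proved, the dual interpolation bounds of the second lemma evaluated at the $\gv{H}^2$ regularity of $\gv{\phi}$, the Agmon and trace estimates \eqref{eq:agmon}, \eqref{eq:traceapp}, and the regularity bound for $(\gv{\phi},\psi)$. For $T_3$ the same Cauchy--Schwarz splitting as in Lemma \ref{lemma:inconsistent}, with $\psi$ in the role of $p$, gives $T_3\le Ch^{m}\nm{\psi}{H^1(\Om)}\nm{\gv{e}}{h}$-type control, which after the regularity bound is consistent with the claimed rate. $T_1$ is treated by boundedness of $B_h$ against $\nm{\gv{\phi}-\gv{\phi}_I}{h}$, and $T_2$ by Cauchy--Schwarz over the edges after subtracting the mean of $p$, which is admissible because $\sum_{e\in\Gamma_h}\int_e\lj(\gv{\phi}-\gv{\phi}_I)\cdot\gv{n}\rj=\sum_{K}\int_K\div(\gv{\phi}-\gv{\phi}_I)=0$ by local divergence-freeness. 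Dividing by $\nm{\gv{e}}{L^2(\Om)}$ then produces \eqref{L2_error_estimate}.

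The hard part is the sharp control of the normal-jump contributions inside $T_1$ and $T_2$, and this is where the case split in \eqref{L2_error_estimate} originates. Because the dual solution is only $\gv{H}^2$-regular while the reconstruction has degree $m$, the interpolation error $\gv{\phi}-\gv{\phi}_I$ has a normal jump whose $L^2(e)$ size is governed by \eqref{eq:traceapp} at effective order one only, and in the energy norm this jump is weighted by the large penalty $\epsilon=\mathrm{O}(h^{-(m+1)})$ carried by the $\snm{\cdot}{\diamond}$ seminorm. It is precisely the interplay between this $h^{-(m+1)}$ weight and the $\gv{H}^2$-limited approximation of $\gv{\phi}$ that determines the attainable power of $h$ and separates $m=1$ from $m\ge 2$; pinning down this balance, rather than the bookkeeping in $T_1$--$T_3$, is the crux of the argument, and I would expect it to be the step most in need of care (and the most delicate to push past the baseline $\mathrm{O}(h)$ rate for higher $m$).
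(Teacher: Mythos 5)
Your energy-norm argument coincides with the paper's proof (same splitting through $\gv{u}_I$, coercivity from Lemma \ref{bounded}, the error equation derived from \eqref{eq:inconsistent}, boundedness, and Lemmas \ref{lemma:approximate_energy_error} and \ref{lemma:inconsistent}), and the skeleton of your duality argument --- adjoint Stokes problem, regularity bounds, the identity $\nm{\gv{e}}{L^2(\Omega)}^2=B_h(\gv{e},\gv{\varphi})+(\lj\gv{e}\cdot\gv{n}\rj,q)_{\Gamma_h}$ --- also matches. The genuine gap is your choice of interpolant for the dual velocity. You insert the reconstruction interpolant $\gv{\phi}_I\in\gv{V}_h$; the paper inserts the BDM interpolant $\gv{\varphi}_{BDM}$. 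With your choice, $T_1$ and $T_2$ are controlled through $\nm{\gv{\phi}-\gv{\phi}_I}{h}$, which contains the seminorm $\snm{\cdot}{\diamond}$ with weight $h^{-(m+1)}$; since the dual velocity is only $H^2$-regular, the normal jump of $\gv{\phi}-\gv{\phi}_I$ on an edge is at best of size $h^{3/2}\nm{\gv{\phi}}{H^2}$ (your ``effective order one'' reading of \eqref{eq:traceapp} is right), so $\snm{\gv{\phi}-\gv{\phi}_I}{\diamond}\le C h^{1-m/2}\nm{\gv{\phi}}{H^2(\Omega)}$ and hence $T_1\le C\nm{\gv{e}}{h}\,h^{1-m/2}\nm{\gv{e}}{L^2(\Omega)}$. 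Multiplying the energy rate $h^{m/2}$ by the dual rate $h^{1-m/2}$ gives $h$ for \emph{every} $m$: your route proves the $m=1$ estimate but caps the rate at $\mathrm{O}(h)$ for all $m$, and therefore cannot reach the claimed $h^{\frac{m}{2}+1}$ for $m\ge 2$. You correctly flagged this balance as the crux and as ``delicate to push past the baseline $\mathrm{O}(h)$ rate,'' but you left it unresolved --- and along the route you propose it is not resolvable.

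The paper's resolution is to abandon the reconstruction interpolant at exactly this point: the BDM interpolant has continuous normal components across interior edges (and zero normal trace on $\partial\Omega$ since $\gv{\varphi}=0$ there), so $\lj(\gv{\varphi}-\gv{\varphi}_{BDM})\cdot\gv{n}\rj\equiv 0$, the penalty-weighted seminorm drops out identically, your $T_2$ vanishes, and $\nm{\gv{\varphi}-\gv{\varphi}_{BDM}}{h}\le Ch\nm{\gv{\varphi}}{H^2(\Omega)}\le Ch\nm{\gv{e}}{L^2(\Omega)}$ at full first order. Then $B_h(\gv{\varphi}-\gv{\varphi}_{BDM},\gv{e})\le Ch\nm{\gv{e}}{h}\nm{\gv{e}}{L^2(\Omega)}\sim h^{\frac{m}{2}+1}$, while the dual-pressure term contributes $Ch^{\frac{m}{2}}\nm{q}{H^1(\Omega)}\nm{\gv{e}}{h}\sim h^{m}$; the case split in \eqref{L2_error_estimate} is precisely the comparison of $h^{m}$ against $h^{\frac{m}{2}+1}$ ($h^m$ dominates when $m=1$, $h^{\frac{m}{2}+1}$ when $m\ge2$), and \emph{not}, as you suggest, an effect of the penalty-weighted jumps of the dual interpolant --- the paper eliminates those entirely. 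One caveat in your favor: since $\gv{\varphi}_{BDM}\notin\gv{V}_h$ (the paper itself stresses BDM is not a subspace of $\gv{V}_h$), the error-equation step you legitimately use for $\gv{\phi}_I$ is not literally available for $\gv{\varphi}_{BDM}$, and the paper's identity $B_h(\gv{\varphi},\gv{e})=B_h(\gv{\varphi}-\gv{\varphi}_{BDM},\gv{e})$ is asserted rather than derived; making it rigorous requires redoing the consistency computation behind \eqref{eq:inconsistent} for normal-continuous piecewise solenoidal test functions, which is additional work, but of a bookkeeping kind rather than the rate-losing obstruction your interpolant creates.
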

\begin{proof}
We split the error with an interpolation function $\gv{u}_I$ in
$\gv{V}_h$,
\begin{equation}\label{eq:tri}
\|\gv{u}-\gv{u}_h\|_h \leq \|\gv{u}-\gv{u}_I\|_h +
\|\gv{u}_I-\gv{u}_h\|_h.
\end{equation}
Then together with the consistency error, we have
\begin{equation}
B_h(\gv{u}-\gv{u}_h,\gv{v}_h) = (p , \lj \gv{v}_h \cdot \gv{n} \rj
)_{\Gamma_h}, \quad \forall \gv{v}_h \in \gv{V}_h
\end{equation}
From the stability of bilinear operator $B_h(\cdot,\cdot)$, the second
term in \eqref{eq:tri} has
\begin{equation}\label{error}
\begin{split}
C \|\gv{u}_I-\gv{u}_h\|^2_h\leq &
B_h(\gv{u}_I-\gv{u}_h,\gv{u}_I-\gv{u}_h) \\ = &
B_h(\gv{u}_I-\gv{u},\gv{u}_I-\gv{u}_h) +
B_h(\gv{u}-\gv{u}_h,\gv{u}_I-\gv{u}_h) \\ = & B_h
(\gv{u}_I-\gv{u},\gv{u}_I-\gv{u}_h) + \left(p, \lj \gv{n} \cdot
(\gv{u}_I-\gv{u}_h) \rj \right )_{\Gamma_h^0} \\ \leq & C
\|\gv{u}-\gv{u}_I\|_h \|\gv{u}_I-\gv{u}_h\|_h + C h^{\frac{m}{2}}
\|p\|_{H^{1}(\Omega)} \|\gv{u}_{I}-\gv{u}_h\|_h .
\end{split}
\end{equation}
Collecting the above estimates and Lemma \ref{lemma:inconsistent}, it
gives
\begin{equation}\label{eq:err_interp_numerial}
\|\gv{u}_I-\gv{u}_h\|_h \leq C h^{\frac{m}{2}}\Lr{ | \gv{u}
  |_{H^{m+1}(\Omega)}+ \|p\|_{H^{1}(\Omega)}}.
\end{equation}
The first term in \eqref{eq:tri} is the interpolation error,
\begin{equation}\label{eq:interp}
\|\gv{u} - \gv{u}_I\|_{h} \leq C h^{\frac{m}{2}}
|\gv{u}|_{H^{m+1}(\Omega)}.
\end{equation}
Collecting estimates \eqref{eq:tri} ,\eqref{eq:err_interp_numerial}
and \eqref{eq:interp} together, we can have
\eqref{energy_error_estimate}.

For the $L^2$ error estimate, we define the auxiliary functions
$(\gv{\varphi} , q)$ and the adjoint problem
\begin{equation}\label{eq:adjoint}
\left\{\begin{array}{ll}- \Delta \gv{\varphi} + \nabla
q=\gv{u}-\gv{u}_h & \mathrm{in }\,\, \Omega, \\[1.5ex] \div
\gv{\varphi} = 0 & \mathrm{in }\,\, \Omega, \\[1.5ex] \gv{\varphi} = 0
& \mathrm{on }\,\, \partial\Omega, \\[1.5ex]
\end{array}\right.
\end{equation}
then apply the test function $\gv{u}-\gv{u}_h$, we have
\begin{equation}\label{eq:l2_split}
\begin{split}
(\gv{u}-\gv{u}_h,\gv{u}-\gv{u}_h)_{\Omega}=&
  B_h(\gv{\varphi},\gv{u}-\gv{u}_h) + (q , \lj (\gv{u}-\gv{u}_h) \cdot
  \gv{n} \rj )_{\Gamma_h}.
\end{split}
\end{equation}
The regularity of Stokes equations implies
\begin{equation}
\begin{split}
\|q\|_{H^1(\Omega)} &\leq \|\gv{u} - \gv{u}_h \|_{L^2(\Omega)},
\\ \|\gv{\varphi}\|_{H^2(\Omega)} &\leq \|\gv{u} - \gv{u}_h
\|_{L^2(\Omega)}, \\ \|\gv{\varphi} - \gv{\varphi}_{BDM} \|_{h} &\leq
h\|\gv{u} - \gv{u}_h \|_{L^2(\Omega)}.
\end{split} 
\end{equation}
where $\gv{\varphi}_{BDM}$ is the BDM interpolation function. This
gives
\begin{equation}\label{eq:l2_split_t1}
\begin{split}
  B_h(\gv{\varphi},\gv{u}-\gv{u}_h) &=
  B_h(\gv{\varphi}-\gv{\varphi}_{BDM},\gv{u}-\gv{u}_h) \\ &\leq C
  \|\gv{u}-\gv{u}_h\|_{h} \|\gv{\varphi}-\gv{\varphi}_{BDM}\|_{h}
  \\ &\leq C h \|\gv{u}-\gv{u}_h\|_{h} \|\gv{u} - \gv{u}_h
  \|_{L^2(\Omega)}
\end{split} 
\end{equation}
and
\begin{equation}\label{eq:l2_split_t2}
\begin{split}
(q , \lj (\gv{u}-\gv{u}_h) \cdot \gv{n} \rj )_{\Gamma_h} &\leq C
  h^{\frac{m}{2}} \|q\|_{H^1(\Omega)}\|\gv{u}-\gv{u}_h\|_{h} \\ &\leq
  C h^{\frac{m}{2}} \|\gv{u}-\gv{u}_h\|_{h} \|\gv{u} - \gv{u}_h
  \|_{L^2(\Omega)}.
\end{split} 
\end{equation}
Inserting \eqref{eq:l2_split_t1} and \eqref{eq:l2_split_t2} to the
equation \eqref{eq:l2_split},
\begin{equation}\label{eq:l2_split_t3}
\|\gv{u}-\gv{u}_h\|_{L^2(\Omega)} \leq C h \|\gv{u}-\gv{u}_h\|_{h} + C
h^{\frac{m}{2}} \|\gv{u}-\gv{u}_h\|_{h}.
\end{equation}
Then substituting the energy norm estimate
\eqref{energy_error_estimate} into \eqref{eq:l2_split_t3}, the $L^2$
error estimate \eqref{L2_error_estimate} is obtained.
\end{proof}


\section{Numerical Examples}\label{sec:examples}
We present some numerical examples to illustrate the
effectiveness of the proposed method. One merit of the method is the
linear system is maintained with the given mesh while ignore the
increase of approximation order. The maximum degree of approximation
polynomial is taken as $4$ which is limited by the condition number
of linear system. The numerical examples with various polygonal meshes
are demonstrated. A direct solver is used to solve the
linear algebra systems after discretisation.
\subsection{Analytical example}
This is an example with a smooth solution to verify the
convergence order as the error estimate predicted. The
Stokes problem with Dirichlet boundary condition is solved in two
dimensional square domain $\Omega=[0,1]\times[0,1]$. The exact
velocity field and pressure are given as follows,
\begin{align*}
  \gv{u} =&\left(\begin{array}{ll} \sin(2\pi x)*\cos (2\pi y) \\ -
    \cos(2\pi x)*\sin (2\pi y)
   \end{array}\right),\\
    p= &x^2+y^2.
\end{align*}
The body force $\gv{f} $ and boundary condition $\gv{g}$ are given
correspondingly. The quasi-uniform triangular and quadrilateral meshes
are used, both of them are generated by the software {\it
  gmsh}\cite{geuzaine2009gmsh}.
\begin{table}[]
  \begin{center}
  \scalebox{0.9}{
    \begin{tabular}{|c|c|c|c||c||c||c||c||c||c||c||}
      \hline & & h=1.0E-1 & 5.0E-2 & & 2.5E-2 & & 1.25E-2 & & 6.25E-3
      & \\ \cline{3-4} \cline{6-6} \cline{8-8} \cline{10-10}
      \multirow{-2}{*}{$P^{m}$} & \multirow{-2}{*}{Norm} & error &
      error & \multirow{-2}{*}{order} & error &
      \multirow{-2}{*}{order} & error & \multirow{-2}{*}{order} &
      error & \multirow{-2}{*}{order} \\ \hline \hline

      & $\|\cdot\|_{L^{2}}$ & 5.41E-02 & 2.41E-02 & 1.16 & 9.97E-03 &
      1.27 & 4.20E-03 & 1.24 & 1.92E-03 & 1.13 \\ \cline{2-11} &
      \multicolumn{1}{c|}{$\|\cdot\|_{1,h}$} &
      \multicolumn{1}{c|}{1.50E+00} & 8.76E-01 & 0.77 & 4.61E-01 &
      0.92 & 2.55E-01 & 0.85 & 1.48E-01 & 0.77 \\ \cline{2-11}
      \multirow{-3}{*}{1} & \multicolumn{1}{c|}{$\|\cdot\|_{h}$} &
      \multicolumn{1}{c|}{2.10E+00} & 1.39E+00 & 0.59 & 8.16E-01 &
      0.77 & 4.97E-01 & 0.71 & 3.15E-01 & 0.65 \\ \hline \hline

      & $\|\cdot\|_{L^{2}}$ & 1.47E-02 & 2.88E-03 & 2.34 & 6.79E-04 &
      2.08 & 1.72E-04 & 1.97 & 4.47E-05 & 1.94 \\ \cline{2-11} &
      \multicolumn{1}{c|}{$\|\cdot\|_{1,h}$} &
      \multicolumn{1}{c|}{3.97E-01} & 1.19E-01 & 1.73 & 3.78E-02 &
      1.66 & 1.28E-02 & 1.55 & 4.84E-03 & 1.40 \\ \cline{2-11}
      \multirow{-3}{*}{2} & \multicolumn{1}{c|}{$\|\cdot\|_{h}$} &
      \multicolumn{1}{c|}{7.72E-01} & 3.52E-01 & 1.13 & 1.55E-01 &
      1.17 & 7.53E-01 & 1.04 & 3.71E-03 & 1.01 \\ \hline \hline

      & $\|\cdot\|_{L^{2}}$ & 5.84E-03 & 8.18E-04 & 2.83 & 1.06E-04 &
      2.94 & 1.88E-05 & 2.49 & 2.97E-06 & 2.66 \\ \cline{2-11} &
      \multicolumn{1}{c|}{$\|\cdot\|_{1,h}$} &
      \multicolumn{1}{c|}{1.41E-01} & 2.67E-02 & 2.40 & 5.30E-03 &
      2.33 & 1.48E-03 & 1.83 & 3.54E-04 & 2.06 \\ \cline{2-11}
      \multirow{-3}{*}{3} & \multicolumn{1}{c|}{$\|\cdot\|_{h}$} &
      \multicolumn{1}{c|}{3.33E-01} & 9.47E-01 & 1.81 & 2.66E-02 &
      1.83 & 9.20E-03 & 1.53 & 3.04E-03 & 1.59 \\ \hline \hline

      & $\|\cdot\|_{L^{2}}$ & 1.14E-03 & 1.03E-04 & 3.46 & 8.25E-06 &
      3.64 & 1.08E-06 & 2.92 & 1.36E-07 & 2.99 \\ \cline{2-11} &
      \multicolumn{1}{c|}{$\|\cdot\|_{1,h}$} &
      \multicolumn{1}{c|}{3.24E-02} & 4.06E-03 & 2.99 & 4.99E-04 &
      3.02 & 9.96E-05 & 2.32 & 1.77E-05 & 2.48 \\ \cline{2-11}
      \multirow{-3}{*}{4} & \multicolumn{1}{c|}{$\|\cdot\|_{h}$} &
      \multicolumn{1}{c|}{9.44E-02} & 1.62E-02 & 2.54 & 3.76E-03 &
      2.10 & 9.15E-04 & 2.04 & 2.12E-04 & 2.10 \\ \hline

  \end{tabular}}
  \caption{Numerical errors on the quasi-uniform triangle meshes for
    Example 1.}
  \label{tri_mesh_table}
  \end{center}
\end{table}

\begin{figure}
  \begin{center}
    \includegraphics[width=0.3\textwidth]{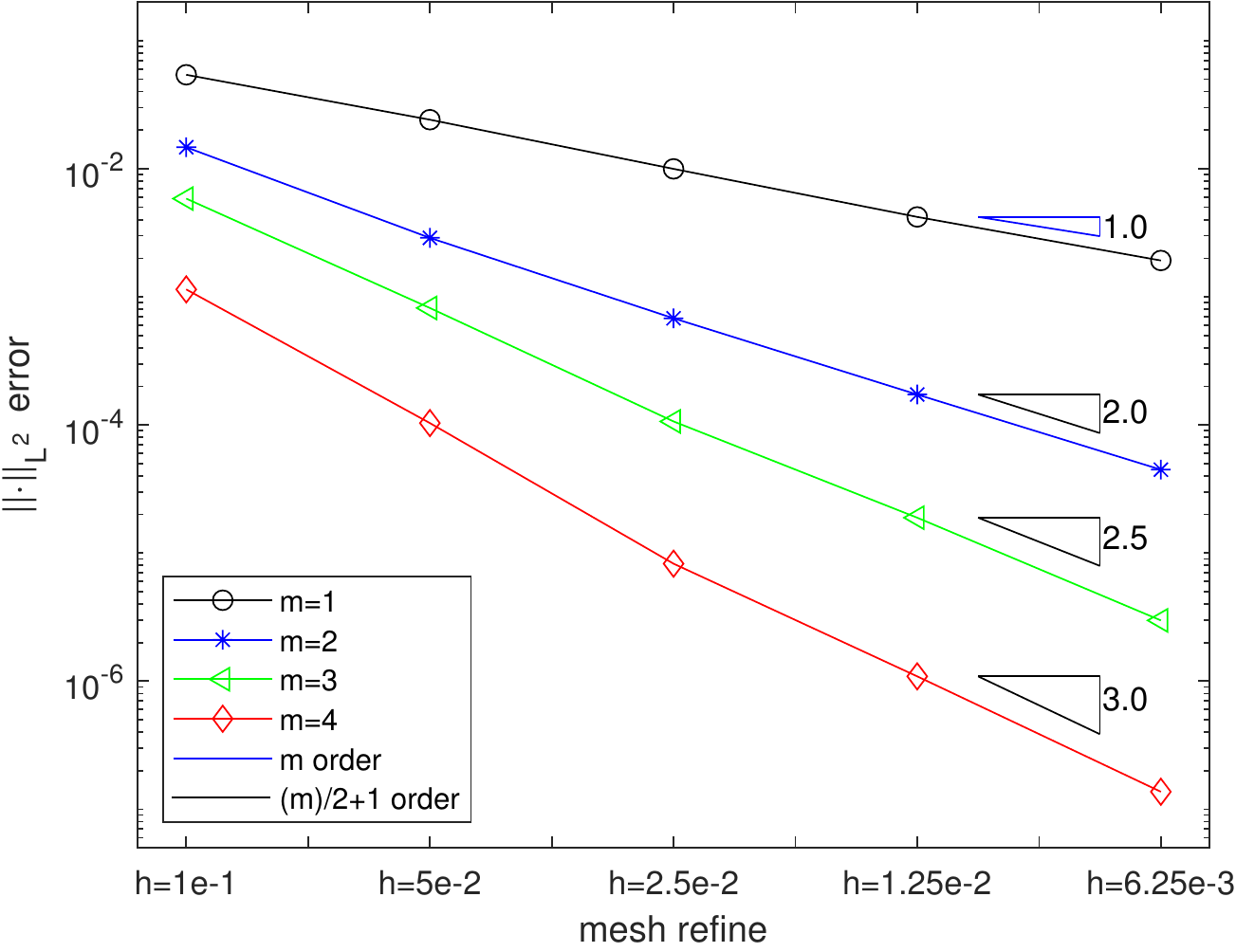}
    \includegraphics[width=0.3\textwidth]{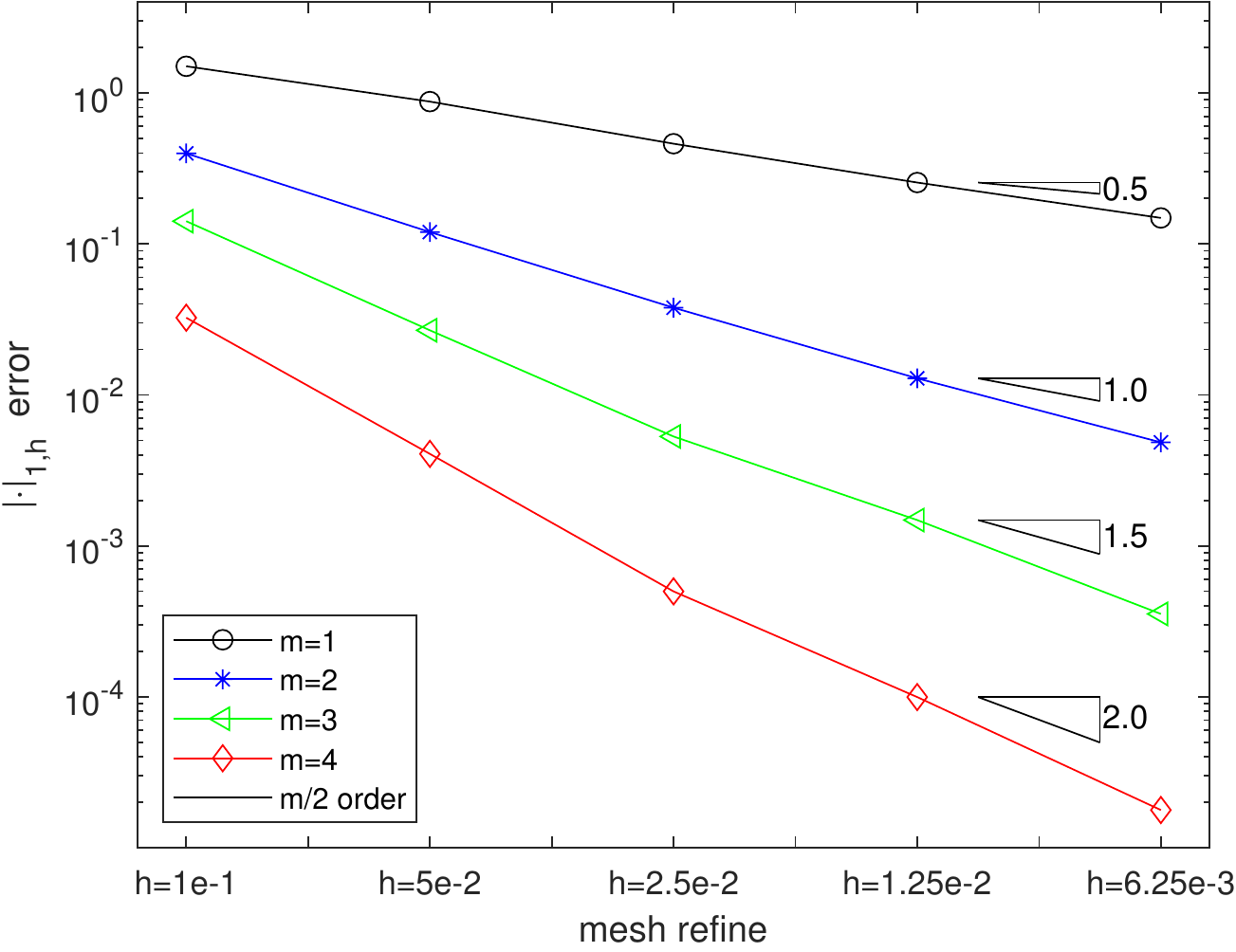}
    \includegraphics[width=0.3\textwidth]{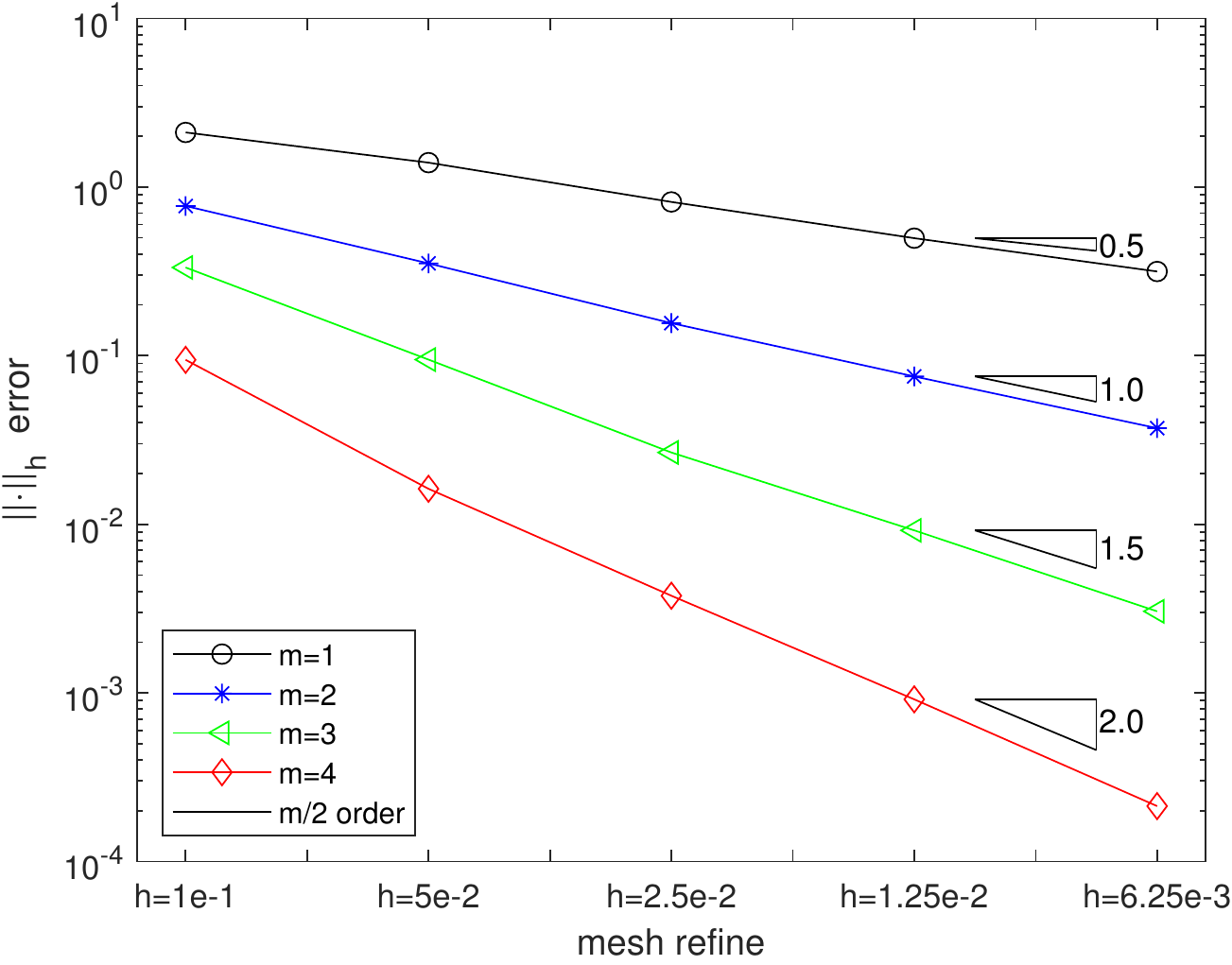}
    \caption{The accuracy order on triangle meshes for Example 1.}
  \label{tri_mesh_fig}
  \end{center}
\end{figure}

\begin{table}[]
  \begin{center}
  \scalebox{0.9}{
    \begin{tabular}{|c|c|c|c||c||c||c||c||c||c||c||}
      \hline & & h=1.0E-1 & 5.0E-2 & & 2.5E-2 & & 1.25E-2 & & 6.25E-3
      & \\ \cline{3-4} \cline{6-6} \cline{8-8} \cline{10-10}
      \multirow{-2}{*}{$P^{m}$} & \multirow{-2}{*}{Norm} & error &
      error & \multirow{-2}{*}{order} & error &
      \multirow{-2}{*}{order} & error & \multirow{-2}{*}{order} &
      error & \multirow{-2}{*}{order} \\ \hline \hline 

      & $\|\cdot\|_{L^{2}}$ & 1.03E-01 & 4.41E-02 & 1.22 & 1.74E-02 &
      1.33 & 8.14E-03 & 1.10 & 3.67E-03 & 1.14 \\ \cline{2-11} &
      \multicolumn{1}{c|}{$\|\cdot\|_{1,h}$} &
      \multicolumn{1}{c|}{1.97E+00} & 1.08E+00 & 0.86 & 5.54E-01 &
      0.96 & 2.96E-01 & 0.90 & 1.62E-01 & 0.87 \\ \cline{2-11}
      \multirow{-3}{*}{1} & \multicolumn{1}{c|}{$\|\cdot\|_{h}$} &
      \multicolumn{1}{c|}{3.10E+00} & 1.92E+00 & 0.69 & 1.22E+00 &
      0.64 & 8.27E-01 & 0.56 & 5.50E-01 & 0.58 \\ \hline \hline 

      & $\|\cdot\|_{L^{2}}$ & 3.80E-02 & 7.71E-03 & 2.30 & 1.92E-03 &
      1.99 & 4.97E-04 & 1.95 & 1.21E-04 & 2.03 \\ \cline{2-11} &
      \multicolumn{1}{c|}{$\|\cdot\|_{1,h}$} &
      \multicolumn{1}{c|}{7.95E-01} & 2.06E-01 & 1.94 & 7.11E-02 &
      1.53 & 2.81E-02 & 1.33 & 1.10E-02 & 1.35 \\ \cline{2-11}
      \multirow{-3}{*}{2} & \multicolumn{1}{c|}{$\|\cdot\|_{h}$} &
      \multicolumn{1}{c|}{1.48E+00} & 5.82E-01 & 1.35 & 2.63E-01 &
      1.14 & 1.22E-01 & 1.10 & 5.58E-03 & 1.13 \\ \hline \hline 

      & $\|\cdot\|_{L^{2}}$ & 1.11E-02 & 1.88E-03 & 2.56 & 2.75E-04 &
      2.77 & 3.94E-05 & 2.80 & 5.96E-06 & 2.72 \\ \cline{2-11} &
      \multicolumn{1}{c|}{$\|\cdot\|_{1,h}$} &
      \multicolumn{1}{c|}{2.59E-01} & 3.31E-02 & 2.96 & 6.17E-03 &
      2.42 & 1.74E-03 & 1.82 & 4.92E-04 & 1.82 \\ \cline{2-11}
      \multirow{-3}{*}{3} & \multicolumn{1}{c|}{$\|\cdot\|_{h}$} &
      \multicolumn{1}{c|}{4.11E-01} & 9.71E-02 & 2.08 & 3.06E-02 &
      1.66 & 1.10E-02 & 1.46 & 3.82E-03 & 1.53 \\ \hline \hline 

      & $\|\cdot\|_{L^{2}}$ & 8.18E-03 & 6.85E-04 & 3.57 & 5.32E-05 &
      3.68 & 4.15E-06 & 3.68 & 4.04E-07 & 3.35 \\ \cline{2-11} &
      \multicolumn{1}{c|}{$\|\cdot\|_{1,h}$} &
      \multicolumn{1}{c|}{1.59E-01} & 9.69E-03 & 4.03 & 1.26E-03 &
      2.94 & 2.18E-04 & 2.52 & 4.06E-05 & 2.42 \\ \cline{2-11}
      \multirow{-3}{*}{4} & \multicolumn{1}{c|}{$\|\cdot\|_{h}$} &
      \multicolumn{1}{c|}{2.52E-01} & 3.55E-02 & 2.82 & 8.73E-03 &
      2.02 & 2.25E-03 & 1.95 & 5.62E-04 & 2.00 \\ \hline

  \end{tabular}}
  \caption{Numerical errors on the quasi-uniform quadrilateral meshes
    for Example 1.}
  \label{quad_mesh_table}
  \end{center}
\end{table}

\begin{figure}
  \begin{center}
    \includegraphics[width=0.3\textwidth]{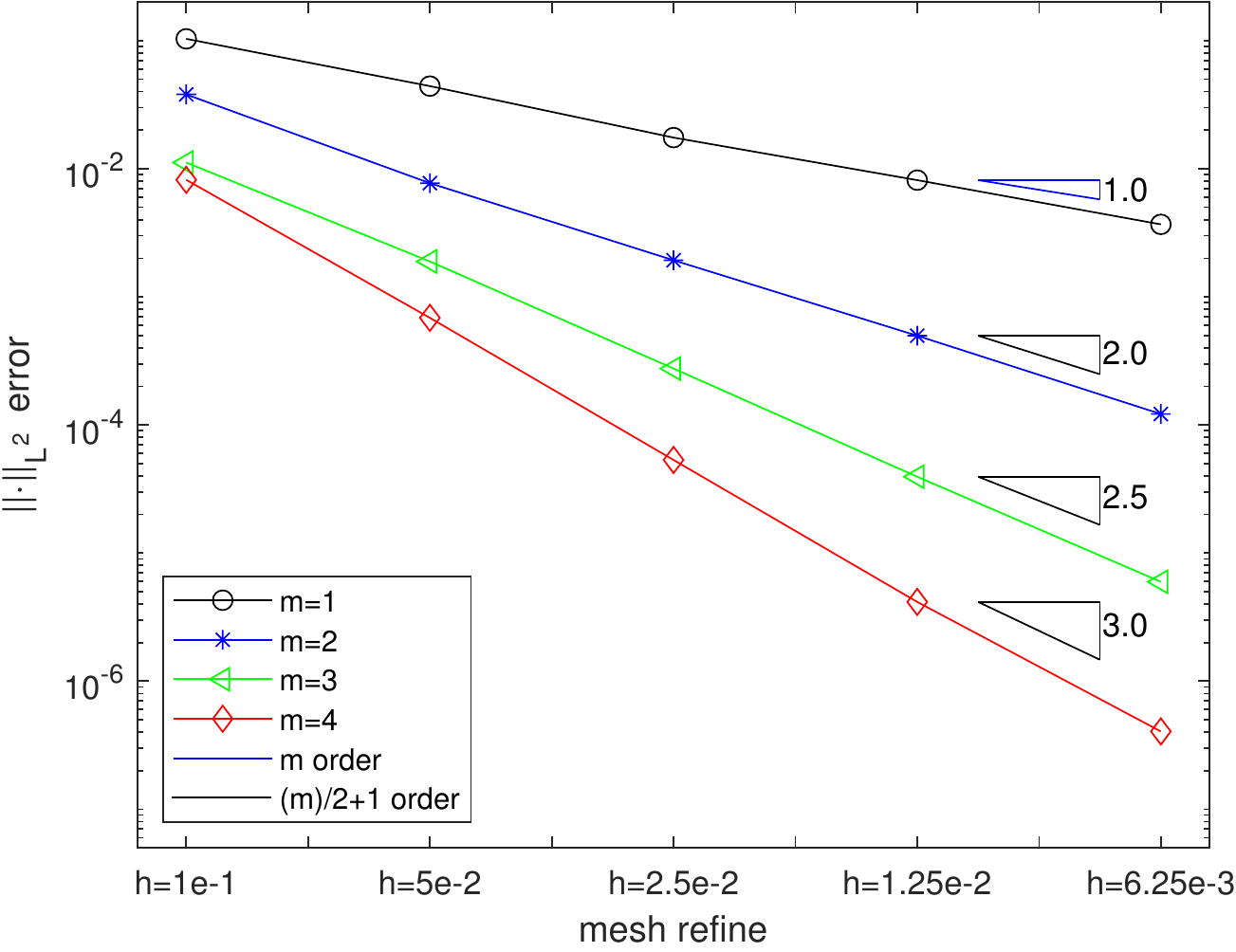}
    \includegraphics[width=0.3\textwidth]{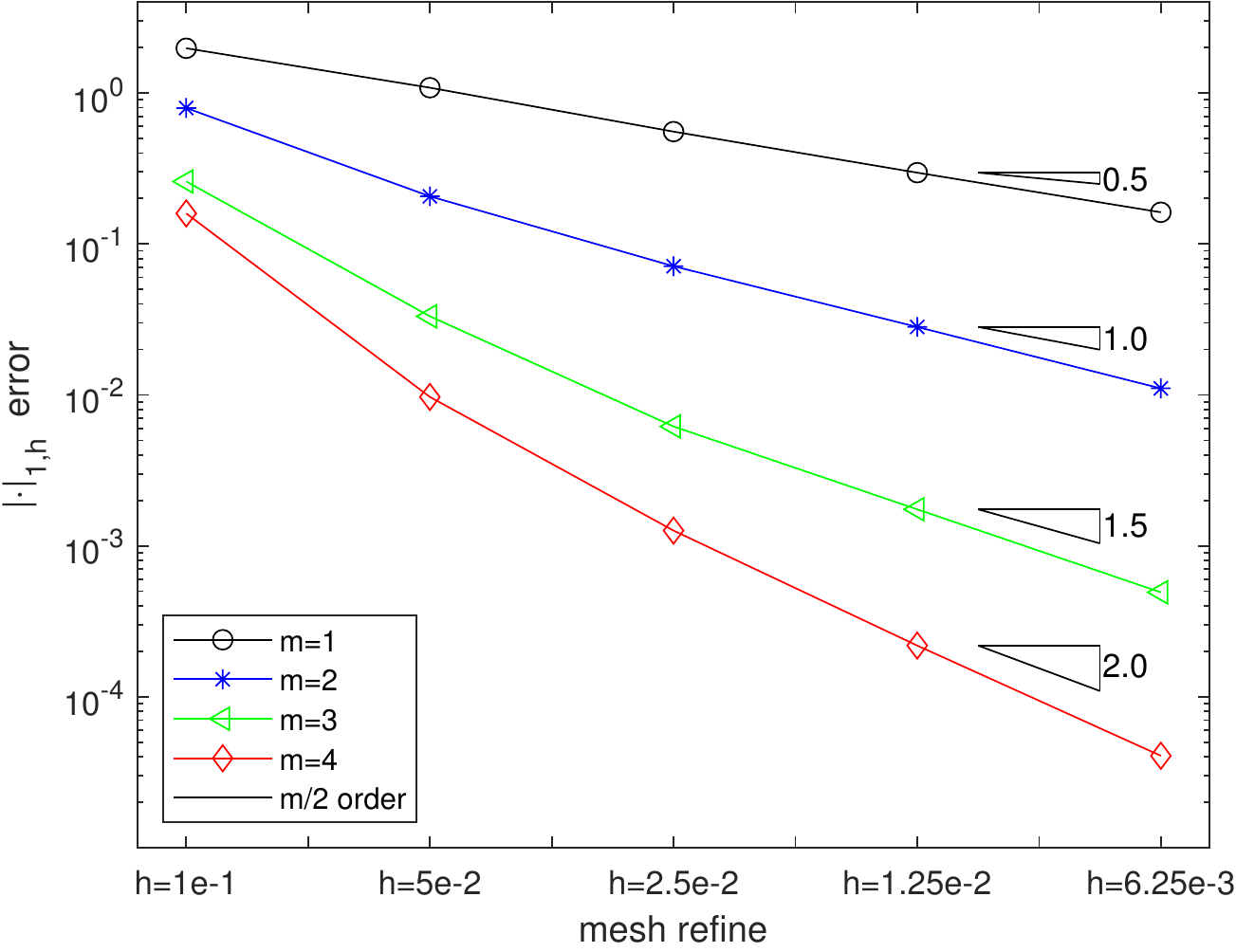}
    \includegraphics[width=0.3\textwidth]{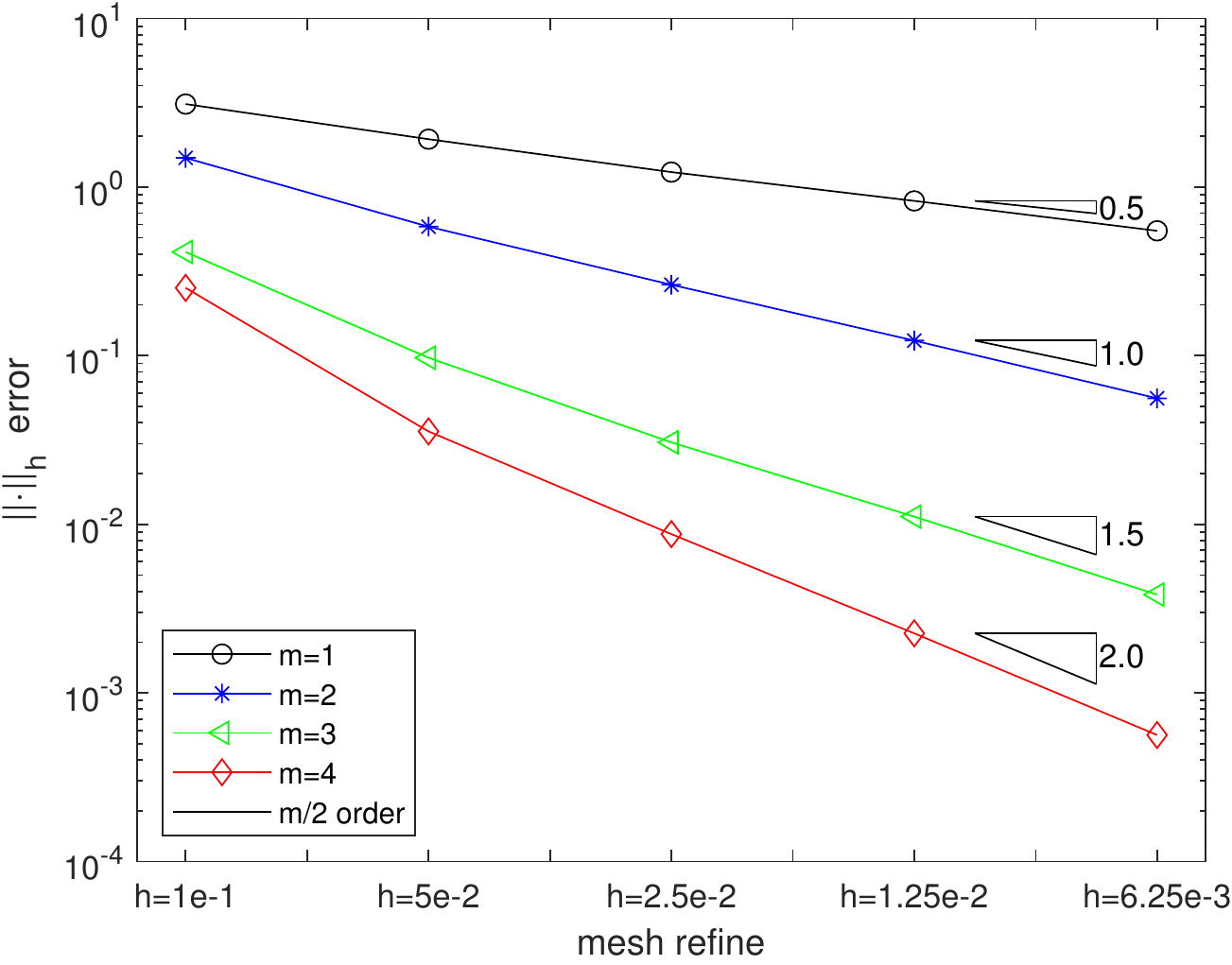}
    \caption{The accuracy order on quadrilateral meshes for Example 1.}
  \label{quad_mesh_fig}
  \end{center}
\end{figure}

Table \ref{tri_mesh_table} and \ref{quad_mesh_table} present the error
of the numerical solution in $L^2$ norm, $|\cdot|_{1,h}$ seminorm and
$\|\cdot\|_h$ norm. Figure \ref{tri_mesh_fig} and \ref{quad_mesh_fig}
plot the accuracy order under different norm, respectively.  The
behavior of the proposed method agrees perfectly with the theoretical
analysis on the convergence order to be $h^{\frac{m}{2}}$ on
$\|\cdot\|_h$ norm. And the convergence order of $L^2$ norm and
$|\cdot|_{1,h}$ seminorm are not optimal as usual situations, which
are $m+1$ and $m$, respectively. This is due to the fact that the
space $\gv{V}_h$ is not large enough that it can not contain the $BDM$
space as a subspace as the traditional discontinued Galerkin
space. The difference between $|\cdot|_{1,h}$ seminorm and
$\|\cdot\|_h$ norm error implies that the seminorm
$|\cdot|_{\diamond}$ is dominate in energy norm. The condition number
of resulting matrix is mainly determined by the penalty parameter
$\epsilon$. And the exponential growth of $\epsilon$ with the
increasing of the degree of the polynomial results in very
ill-posed linear system.

\subsection{Lid-driven cavity example}
The lid-driven cavity is a benchmark test for the incompressible flow
that does not have an exact solution. Here we sets the cavity domain
as a rectangle $\Omega=[0,1]\times[0,1.5]$. The body force is zero,
and a Dirichlet boundary condition is given as
\begin{equation}
\gv{g}=\left\{\begin{array}{ll} (1,0) \quad \text{if} \ x
\in(0,1),y=1.5,\\ (0,0) \quad \text{other boundaries.}
\end{array}\right.
\end{equation} 

\begin{figure}
  \begin{center}
    \includegraphics[width=0.31\textwidth]{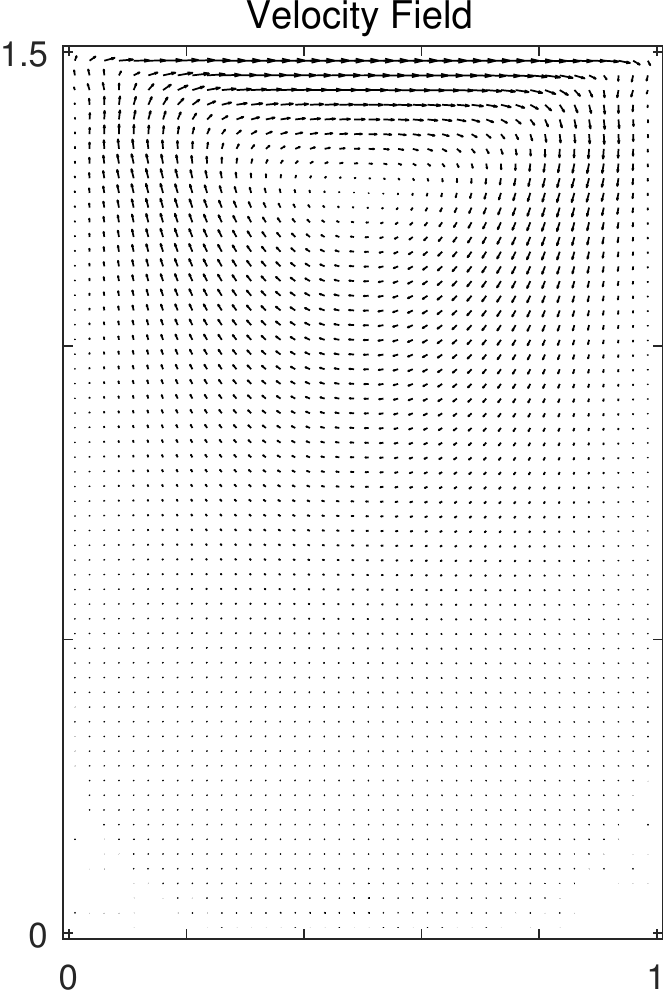}
    \includegraphics[width=0.31\textwidth]{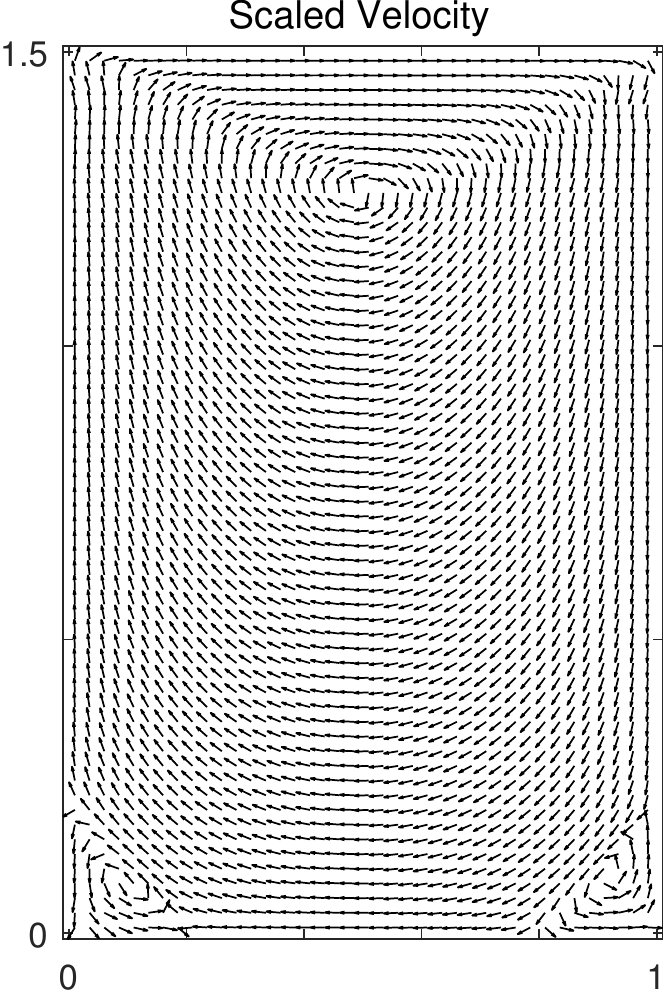}
    \includegraphics[width=0.31\textwidth]{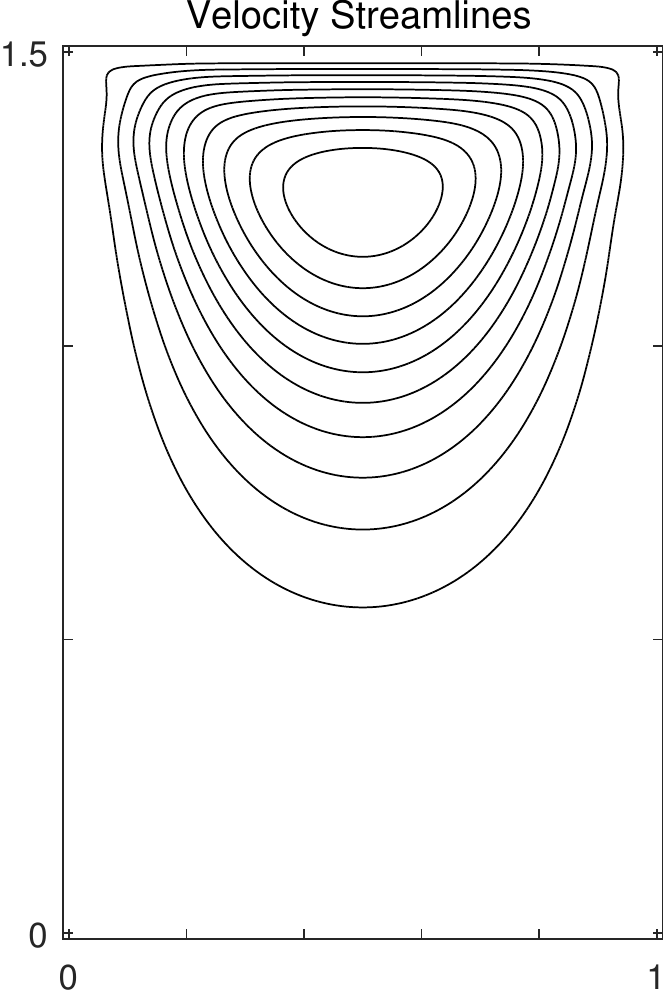}
    \caption{The velocity, scaled velocity field and streamline of
      lid-driven cavity}
  \label{lid_driven_fig}
  \end{center}
\end{figure}
Figure \ref{lid_driven_fig} shows the velocity fields and the scaled
velocity. The domain is segmented by a quasi-uniform triangle mesh
meanwhile the third order polynomial is employed. The numerical
results fit the realities. The scaled velocity present the expected
phenomenon which the Contra vortices appear in the bottom corner. The
subtle vortices will capture with refined mesh.

\subsection{Flow around cylinder}
Flow around cylinder is another representative benchmark test for the
incompressible flow. The numerical setting is slightly different from
the traditional setting. The polygonal elements are considered to
demonstrate the compatibility of the proposed method. While the
computational domain is a rectangle minus a circle
$\Omega=[0,1.5]\times[0,1] / (x-0.5)^2+(y-0.5)^2 \leq 0.2^2$, which is
partitioned into polygonal elements mesh by PolyMesher
\cite{talischi2012polymesher}. The Dirichlet boundary condition is
given while with zero body force.
\begin{equation}
\gv{g}=\left\{\begin{array}{ll} (y(1-y),0) \quad \text{if} \ x=0,1.5,y
\in(0,1),\\ (0,0) \quad \text{elsewhere.}
\end{array}\right.
\end{equation} 

\begin{figure}
  \begin{center}
    \includegraphics[width=0.45\textwidth]{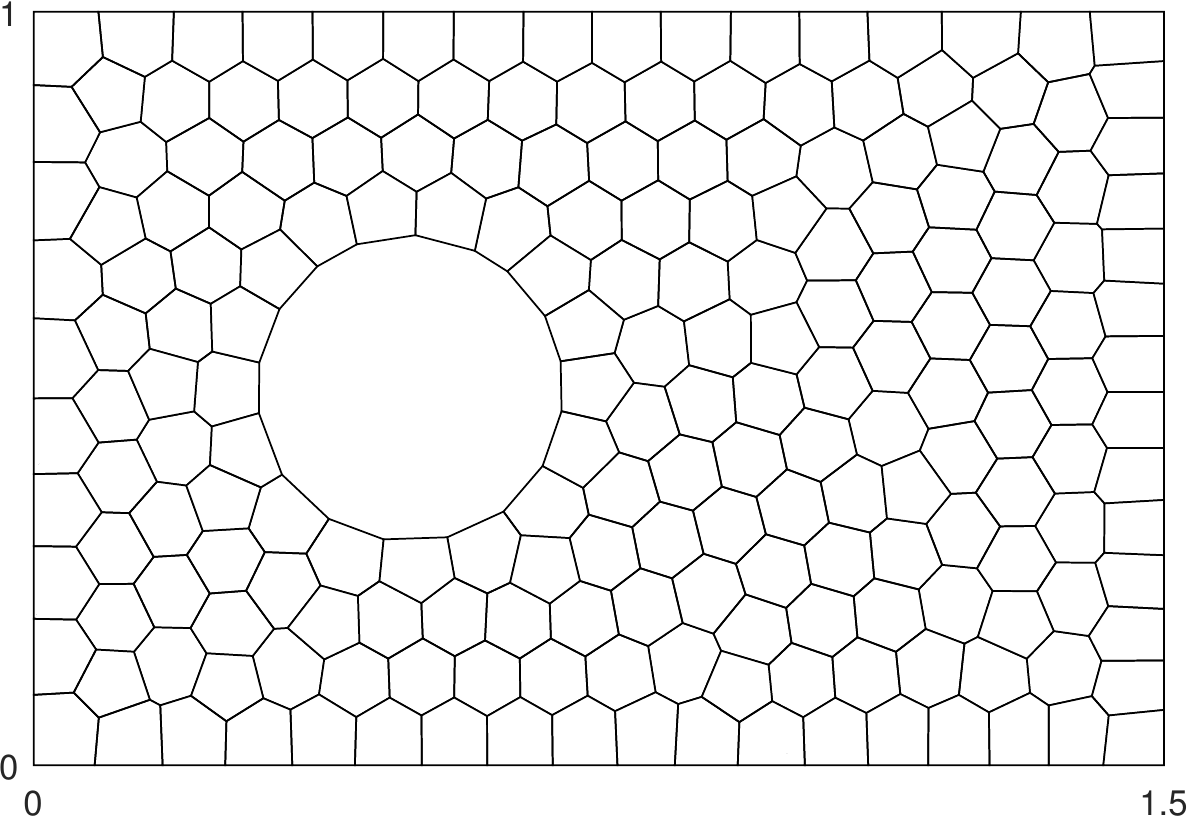}
    \includegraphics[width=0.45\textwidth]{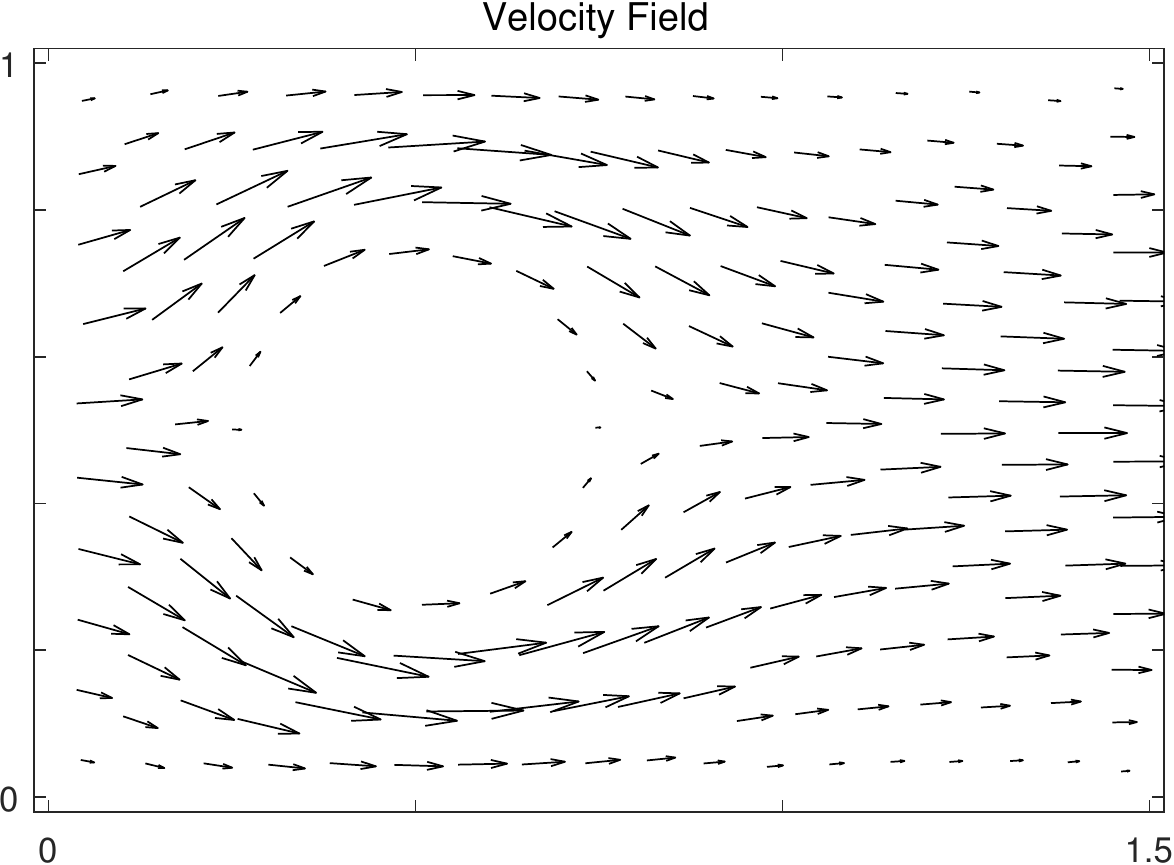}
    \caption{The velocity and scaled velocity field of flow around
      cylinder}
  \label{flow_around_cylinder_fig}
  \end{center}
\end{figure}
Figure \ref{flow_around_cylinder_fig} shows the polygonal mesh and the
corresponding velocity field. The proposed method provides a smooth
solution which is agreed with the expected behavior. and it can handle
a wide variety of polygonal elements without additional techniques.

\section{Conclusions}
The discontinuous Galerkin method by locally divergence-free patch
reconstruction for simulation of incompressible Stokes problems is
developed. The interior penalty method with solenoidal velocity field
allows calculating the velocity field with no presence of
pressure. This method can achieve high order accuracy with one degree
of freedom per element. In spite of such advantage, our method suffers
from the inconsistence penalty term $h^{-(m+1)} $ which leads to an
ill-posed linear system for large $m$.


\bibliographystyle{abbrv}

\end{document}